\newtheorem{theorem}{Theorem}[section]
\newtheorem{lemma}[theorem]{Lemma}
\newtheorem{proposition}[theorem]{Proposition}
\newtheorem{corollary}[theorem]{Corollary}
\theoremstyle{definition}
\newtheorem{definition}[theorem]{Definition}
\theoremstyle{remark}
\newtheorem{remark}[theorem]{Remark}
\numberwithin{equation}{section}
\def\neweq#1{\begin{equation}\label{#1}}
\def\endeq{\end{equation}}
\def\eq#1{(\ref{#1})}
\def\super{\overline}
\newcommand{\R}{{\mathbb R}}
\newcommand{\eps}{\varepsilon}
\renewcommand{\a}{\alpha}
\renewcommand{\b}{\beta}
\newcommand{\var}{\varphi}
\definecolor{violet}{rgb}{0.5,0,0.8}
\newcommand{\ov}{\overline v}
\newcommand{\ou}{\overline u}
\begin{document}

\title[The Gel'fand problem for the biharmonic operator]{The Gel'fand problem for the biharmonic operator}

\author{}
\address{}
\email{}

\author[L. Dupaigne]{Louis Dupaigne}

\address{LAMFA, UMR CNRS 7352, Universit\'e de Picardie Jules Verne, 33 rue St Leu, 80039, Amiens Cedex, France}

\email{louis.dupaigne@math.cnrs.fr}

\author[M. Ghergu]{Marius Ghergu}

\address{School of Mathematical Sciences, University College Dublin, Belfield, Dublin 4, Ireland}

\email{marius.ghergu@ucd.ie}

\author[O. Goubet]{Olivier Goubet}

\address{LAMFA, UMR CNRS 7352, Universit\'e de Picardie Jules Verne, 33 rue St Leu, 80039, Amiens Cedex, France}

\email{olivier.goubet@u-picardie.fr}

\author[G. Warnault]{Guillaume Warnault}

\address{LMA, UMR CNRS 5142, Universit\'e de Pau et Pays de l'Adour, Avenue de l'Universit\'e, BP 1155, 64013 Pau Cedex, France}

\email{guillaume.warnault@univ-pau.fr}

\subjclass[2010]{XXXX}


\subjclass{}



\keywords{AMS MSC: Primary: 35J91; 35B53; Secondary: 35B65; 35B45; 35B35. Biharmonic equation; exponential nonlinearity; Liouville theorem; stable solutions; extremal solutions; smoothness and partial regularity}

\maketitle

%
{

\begin{abstract}We study stable solutions of the equation $\Delta^2 u = e^{u}$, both in entire space and in bounded domains.
\end{abstract}

\section{Introduction}
A classical result attributed\footnote{see H. Brezis, \cite{brezis}.} to G.I. Barenblatt asserts that there exists infinitely many solutions to the equation
\begin{equation} \label{gelfand}
-\Delta u =2 e^{u}\qquad\text{in $\Omega$},
\end{equation}
whenever  $\Omega$ is the unit ball of $\R^3$ and the equation is supplemented with a homogeneous Dirichlet boundary condition. The result appeared in a volume edited by I.M. Gel'fand \cite{gelfand}, whose name the problem now bears. We refer the interested reader to the book \cite{book} for some of the developments of this equation in the more than sixty years that separate us from Barenblatt's discovery. Let us simply mention that K. Nagasaki and T. Suzuki \cite{ns} completly classified the solutions found by Barenblatt according to their Morse index. Much of what can be said of the equation posed in a general domain $\Omega$ rests, through a blow-up analysis, upon Liouville-type theorems for finite Morse index solutions of the equation posed in entire space. This lead N. Dancer and A. Farina \cite{df} to proving that in dimension $3\le N\le 9$, any solution to \eqref{gelfand} in $\Omega=\R^N$ is unstable outside every compact set and so, it has infinite Morse index.

\noindent In the present work, we consider the fourth-order analogue of the Gel'fand problem. Motivated by the aforementioned results and by the expanding literature on fourth-order equations, see in particular the books \cite{ggs} by F. Gazzola, H.-Ch. Grunau, and G. Sweers, and \cite{egg} by P. Esposito, N. Ghoussoub, and Y. Guo, we want to classify solutions of
\begin{equation} \label{gelfandbe}
\Delta^2 u = e^{u}\qquad\text{in $\R^N$},
\end{equation}
which are stable (resp. stable outside a compact set), that is, solutions such that
\begin{equation} \label{stability}
\int_{\Omega}e^{u}\varphi^2\;dx \le \int_{\Omega}\vert\Delta\varphi\vert^2\;dx\quad\text{for all $\varphi\in H^2(\Omega)\cap H^1_{0}(\Omega)$,}
\end{equation}
where $\Omega$ is $\R^N$(resp. the complement of some compact subset of $\R^N$).  
Consider first radial solutions.  Noting that the equation is invariant under the  scaling transformation
\begin{equation} \label{scaling} u_{\lambda}(x) = u(\lambda x) + 4\ln\lambda, \quad x\in\R^N, \lambda>0,\end{equation}
we may always assume that $u(0)=0$.
\begin{proposition} \label{prop radial} Let $5\le N\le12$. Assume $u$ is a radial solution of \eqref{gelfandbe}. Let $v=-\Delta u$, $0=u(0)$, and $\beta=v(0)$. There exist $\beta_{1}>\beta_{0}>0$ depending on $N$ only such that
\begin{itemize}
\item $\beta\ge\beta_{0}.$
\item If $\beta=\beta_{0}$, $u$ is unstable outside every compact set.
\item If $\beta\in(\beta_{0},\beta_{1})$, $u$ is unstable, but $u$ is stable outside a compact set.
\item If $\beta\ge\beta_{1}$, $u$ is stable.
\end{itemize}
\end{proposition}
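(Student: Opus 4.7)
The plan is to treat $\beta$ as a shooting parameter for the radial first-order system
\[(r^{N-1}u')'=-r^{N-1}v,\qquad (r^{N-1}v')'=-r^{N-1}e^u,\]
with data $u(0)=u'(0)=v'(0)=0$, $v(0)=\beta$, and to compare every orbit $(u_\beta,v_\beta)$ with the explicit singular solution
\[u_s(r)=-4\ln r+\ln\bigl(8(N-2)(N-4)\bigr),\qquad v_s(r)=\frac{4(N-2)}{r^2},\]
of \eqref{gelfandbe} on $\R^N\setminus\{0\}$, whose existence for $N\ge 5$ motivates the dimension range.

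First I would establish the dichotomy defining $\beta_0$. Since $(r^{N-1}v_\beta')'\le 0$ and $v_\beta'(0)=0$, $v_\beta$ is nonincreasing; while $v_\beta\ge 0$, also $u_\beta'\le 0$, $u_\beta\le 0$, $e^{u_\beta}\le 1$, so no blow-up can occur. If instead $v_\beta$ becomes strictly negative at some finite radius, an integral estimate on the then-growing $u_\beta$ produces finite-radius blow-up. This identifies $\beta_0:=\inf\{\beta>0:u_\beta\text{ is global}\}$, and ODE comparison with $u_s$ on annuli yields $0<\beta_0<\infty$; closedness of the global-existence set ensures $u_{\beta_0}$ itself is defined on $[0,\infty)$.

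Next I would analyze the asymptotic behaviour via the Emden--Fowler change of variables $t=\ln r$, $w(t)=u(e^t)+4t$, $z(t)=r^2v(r)$, which turns \eqref{gelfandbe} into an autonomous fourth-order ODE in $(w,w',z,z')$ with a unique equilibrium $P^*$ corresponding to $u_s$. A phase-plane analysis identifies $\beta=\beta_0$ as the unique value for which the orbit lies on the stable manifold of $P^*$, so that $u_{\beta_0}(r)-u_s(r)\to 0$ as $r\to\infty$; for $\beta>\beta_0$, the orbit leaves that stable manifold and the solution satisfies $v_\beta(r)\to c_\beta>0$, hence $u_\beta(r)\sim -c_\beta r^2/(2N)$.

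Finally I would derive the stability statements from the sharp Hardy--Rellich inequality
\[\int_{\R^N}|\Delta\varphi|^2\,dx\ge H_N\int_{\R^N}\frac{\varphi^2}{|x|^4}\,dx,\qquad H_N=\frac{N^2(N-4)^2}{16}.\]
A direct computation shows that $8(N-2)(N-4)>H_N$ precisely when $N\le 12$, so the asymptotic coefficient of $|x|^{-4}$ in $e^{u_{\beta_0}}$ strictly exceeds $H_N$; testing \eqref{stability} against the scale-invariant family $\varphi_R(x)=R^{(4-N)/2}\varphi_0(x/R)$ supported in far annuli then violates \eqref{stability} at arbitrarily large scales and proves that $u_{\beta_0}$ is unstable outside every compact set. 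For $\beta>\beta_0$, the Gaussian-type decay of $e^{u_\beta}$ gives $|x|^4e^{u_\beta(x)}\to 0$ at infinity, so Hardy--Rellich on $\R^N\setminus B_R$ for large $R$ yields stability outside a compact set. An elementary computation on the radial ODE (critical-point analysis of $r\mapsto r^4e^{u_\beta(r)}$) shows that $\max_{r>0}r^4e^{u_\beta(r)}\to 0$ as $\beta\to\infty$, so Hardy--Rellich gives full stability for large $\beta$. Applying the comparison principle to the linearized radial system yields $\partial_\beta u_\beta<0$, so the first eigenvalue $\mu(\beta)$ of $\Delta^2-e^{u_\beta}$ is strictly increasing and continuous; $\beta_1:=\inf\{\beta>\beta_0:\mu(\beta)\ge 0\}$ is then finite and strictly greater than $\beta_0$, producing the announced dichotomy. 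The main obstacle is the phase-plane analysis of the second step: one must control a four-dimensional autonomous flow precisely enough to show that the stable manifold of $P^*$ is one-dimensional and parametrized by a single value $\beta_0$ of the shooting parameter, with every other globally defined orbit escaping to the regime $v_\beta(\infty)>0$.
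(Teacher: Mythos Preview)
Your outline is sound and would lead to a complete proof, but it is considerably more elaborate than the paper's argument, which outsources most of the work to \cite{agg} and \cite{bffg}. The paper takes as given from those references: the existence and value of $\beta_0$, the instability of $u_{\beta_0}$ outside every compact set for $5\le N\le 12$, the stability outside a compact set for every $\beta>\beta_0$, the pointwise ordering $u_{\tilde\beta}\le u_\beta$ whenever $\tilde\beta>\beta$, and the explicit quadratic upper bound $u_\beta(r)\le -\frac{\beta-\beta_0}{2N}r^2$. With this last inequality in hand, stability for large $\beta$ is a one-line computation: choose $\overline\beta$ so that $e^{-(\overline\beta-\beta_0)r^2/(2N)}\le N^2(N-4)^2/(16r^4)$ for all $r>0$, and Hardy--Rellich concludes. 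The interval structure of $\Lambda=\{\beta>\beta_0:u_\beta\text{ stable}\}$ then follows directly from the ordering of the solutions (if $u_\beta$ is stable and $\tilde\beta>\beta$, then $e^{u_{\tilde\beta}}\le e^{u_\beta}$, so $u_{\tilde\beta}$ is also stable), with closedness at $\beta_1$ coming from continuous dependence on the parameter. Your route through $\partial_\beta u_\beta<0$ and monotonicity/continuity of the bottom of the spectrum $\mu(\beta)$ is equivalent but slightly less direct, and continuity of $\mu$ on all of $\R^N$ deserves a word of justification. What your approach buys is self-containment; what the paper's buys is brevity, since the genuinely delicate step---the four-dimensional Emden--Fowler phase-plane analysis you correctly flag as the main obstacle---is entirely exported to the literature.
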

\begin{remark} The fact that no radial solution exists for $\beta<\beta_{0}$ is due to G. Arioli, F. Gazzola and H.-C. Grunau \cite{agg}. In addition, E. Berchio, A. Farina, A. Ferrero, and F. Gazzola first proved in \cite{bffg} that for $5\le N\le 12$, $u$ is stable outside a compact set\footnote{the notion of stability outside a compact set that we use here is stronger than the one given in \cite{bffg}. One can easily check that the results of \cite{bffg} remain true in our setting.} if and only if $\beta>\beta_{0}$. The novelty here is the number $\beta_{1}$: our result characterizes stable radial solutions when $5\le N\le12$. See \cite{bffg}, \cite{warnault} for the remaining cases $1\le N\le 4$ and $N\ge13$.
\end{remark}
In particular, there is no hope of proving a result similar to that of Dancer and Farina in our context, without further restrictions on the solution. One might ask whether all stable solutions are radial, at least in dimension $5\le N\le12$. This is still not the case.
\begin{theorem}\label{nonradialstable}
Assume $N\geq 5$. Take a point $x^0=(x^0_{1},\dots,x^0_{N})\in\R^N$, 
parameters $\alpha_{1},\dots,\alpha_{N}>1+N/2$, and let
\begin{equation}\label{polynomial}  p(x)=\sum_{i=1}^N\alpha_{i}(x_i-x^0_i)^2.\end{equation} Then, there exists a solution $u$ of \eq{gelfandbe} such that
\neweq{asympt1}
u(x)=-p(x)+{\mathcal O}(|x|^{4-N})\quad\mbox{ as }|x|\rightarrow
\infty,
\endeq
In particular, $u$ is stable outside a compact set (resp. stable, if $\min_{i=1,\dots,N}\alpha_{i}$ is large enough) and $u$ is not radial about any point if the coefficients $\alpha_{i}$ are not all equal.
\end{theorem}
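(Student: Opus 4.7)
The plan is to seek $u$ in the form $u=-p+w$, with a perturbation $w$ decaying at infinity like $|x|^{4-N}$. Since $p$ has degree $2$, $\Delta^{2}p\equiv 0$, so \eqref{gelfandbe} is equivalent to
\[
\Delta^{2}w=e^{-p(x)}\,e^{w(x)}\quad\text{in }\R^{N}.
\]
The key point is that the source $e^{-p}$ has Gaussian decay, so convolving it against the fundamental solution $G_{N}(x)=c_{N}|x|^{4-N}$ of the biharmonic operator (which is defined precisely when $N\ge 5$) produces a function of size $O(|x|^{4-N})$ at infinity; this is what makes the ansatz $u=-p+w$ consistent with the equation.

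To implement the construction I would work in the weighted Banach space
\[
X=\Big\{w\in C(\R^{N}):\|w\|_{X}:=\sup_{x\in\R^{N}}(1+|x-x^{0}|)^{N-4}|w(x)|<\infty\Big\},
\]
and look for a fixed point of $T:w\mapsto G_{N}\ast(e^{-p+w})$ in a small closed ball $B_{\eta}\subset X$. The core technical estimate, which is the main obstacle I anticipate, is
\[
\int_{\R^{N}}\frac{e^{-p(y)}}{|x-y|^{N-4}}\,dy\le\frac{C}{(1+|x-x^{0}|)^{N-4}},\qquad x\in\R^{N}.
\]
It follows by splitting $\R^{N}=\{|y-x^{0}|\le|x-x^{0}|/2\}\cup\{|y-x^{0}|>|x-x^{0}|/2\}$: on the first piece one uses $|x-y|\gtrsim|x-x^{0}|$ and the integrability of the Gaussian $e^{-p}$; on the second piece $e^{-p(y)}$ itself is super-exponentially smaller than $|x-x^{0}|^{4-N}$. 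Combined with the pointwise bound $|e^{-p+w_{1}}-e^{-p+w_{2}}|\le e^{\eta}e^{-p}|w_{1}-w_{2}|$ on $B_{\eta}$, this single estimate drives both the invariance $T(B_{\eta})\subset B_{\eta}$ and the contraction of $T$ for $\eta$ small enough; the hypothesis $\alpha_{i}>1+N/2$ enters at this step, ensuring that the constant $C$ above is small enough to close the fixed-point argument. The resulting $w\in X$ yields a classical solution $u=-p+w$ of \eqref{gelfandbe} satisfying \eqref{asympt1} by construction, and non-radiality when the $\alpha_{i}$ differ is immediate from the leading $-p$ term.

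For the stability assertions I would use two different arguments. Stability outside a compact set follows by fixing $R$ so large that $|x-x^{0}|^{4}e^{u(x)}\le [N(N-4)/4]^{2}$ on $\R^{N}\setminus B_{R}$ (possible since $u\to-\infty$ at least as fast as $-p$) and then applying Rellich's inequality on that exterior domain to test functions $\varphi$ supported there. For global stability when $\min_{i}\alpha_{i}$ is large, I would instead combine the Sobolev embedding $H^{2}\cap H^{1}_{0}\hookrightarrow L^{2N/(N-4)}$ with H\"older's inequality to reduce \eqref{stability} to the smallness of $\|e^{u}\|_{L^{N/4}(\R^{N})}$; since $e^{u}\approx e^{-p}$ and $\int_{\R^{N}}e^{-(N/4)p}\,dx$ is proportional to $\prod_{i}\alpha_{i}^{-1/2}$, this norm is indeed arbitrarily small once $\min_{i}\alpha_{i}$ is large enough.
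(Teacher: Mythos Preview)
Your approach is sound but genuinely different from the paper's. The paper also writes $u=-p+z$, but instead of a contraction it solves the cooperative system $-\Delta z=w$, $-\Delta w=e^{-p}e^{z}$ by sub/super-solutions: $(0,0)$ is a sub-solution and the explicit pair $Z=(1+|x|^{2})^{2-N/2}$, $W=(1+|x|^{2})^{1-N/2}$ is a super-solution, the verification reducing to the elementary inequality $e^{-(1+N/2)t}\le(1+t)^{-1-N/2}$. This is precisely where the threshold $\alpha_{i}>1+N/2$ enters, and once it is checked the monotone iteration gives $0<z\le Z$ with no further quantitative estimate; the asymptotic \eqref{asympt1} and the pointwise bound $e^{u}\le e^{1-p}$ needed for the stability arguments then come for free. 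Your fixed-point scheme in the weighted space $X$ should also succeed and is more flexible in principle, but the particular threshold $1+N/2$ is not intrinsic to it: closing the contraction requires the quantitative smallness $c_{N}\sup_{x}(1+|x-x^{0}|)^{N-4}\!\int|x-y|^{4-N}e^{-p(y)}\,dy<1/e$, which you assert but do not verify (it does seem to hold, with margin, for all $N\ge 5$, but this has to be checked). For stability outside a compact set both proofs invoke Hardy--Rellich in the same way; for global stability your Sobolev/H\"older reduction to the smallness of $\|e^{u}\|_{L^{N/4}}$ is a legitimate alternative to the paper's direct pointwise check that $e^{1-p(x)}\le\tfrac{N^{2}(N-4)^{2}}{16}|x|^{-4}$ for all $x$ once $\min_{i}\alpha_{i}$ is large.
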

\begin{remark}
Using the scaling \eqref{scaling}, one immediately obtains a solution $u$ of \eq{gelfandbe} such that
$u(x)=-p(x)+C + {\mathcal O}(|x|^{4-N})$ as $|x|\rightarrow \infty$, under the sole assumption that $\alpha_i>0$ for all $i=1,\dots, N$.
\end{remark}
\begin{remark}In fact, any solution satisfying \eqref{asympt1} has finite Morse index, thanks to the Cwikel-Lieb-Rozenbljum formula (see G.V. Rozenbljum \cite{r} and D. Levin \cite{l} for the formula, as well as A. Farina \cite{farina} for its application to semilinear elliptic equations). We thank A. Farina for this observation.
\end{remark}

\begin{remark}In dimension $N\ge13$, the radial solution $u_{\beta0}(x)=-4\ln\vert x\vert + \mathcal O(1)$ is stable, see \cite{bffg}. Observe that $u_{\beta_{0}}$ does not satisfy \eqref{asympt1}. In dimension $5\le N\le 12$, it would be interesting to determine whether, up to rescaling and rotation, all stable solutions do satisfy \eqref{asympt1}.
\end{remark}
All stable solutions that we have encountered so far have quadratic behavior at infinity. In particular, letting
$$v=-\Delta u\quad\text{ and }\quad\super v(r)=\fint_{\partial B_{r}}v\;d\sigma,$$ these solutions satisfy $\super v(\infty)>0$, where \footnote{For any solution of \eqref{gelfandbe}, $v$ is superharmonic. In particular, its spherical average $\super v$ is a decreasing function of $r$. 
We will prove that $v>0$, so that $\super v(\infty)\ge0$ is always well-defined.}
$$ \super v(\infty):=\lim_{r\to+\infty}\super v(r).$$
This motivates the following Liouville-type result.
\begin{theorem}\label{liouville} Assume $5\le N\le 12$. Let $u$ be a solution of \eqref{gelfandbe} such that $\super v(\infty)=0$. Then, $u$ is unstable outside every compact set.
\end{theorem}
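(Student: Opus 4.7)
The plan is to argue by contradiction: assume $u$ is stable outside some compact set $K$, and then violate the stability inequality by a carefully chosen test function exploiting the assumption $\super v(\infty)=0$. The first move is a \emph{radial reduction via Jensen}. Since $t\mapsto e^t$ is convex, Jensen's inequality on each sphere gives $e^{\super u(r)}\le \super{e^u}(r)$. Plugging a radial test function $\varphi(x)=\psi(|x|)$ into the stability inequality and splitting in polar coordinates yields the radial stability
\[\int_0^\infty e^{\super u(r)}\,\psi(r)^2\,r^{N-1}\,dr \le \int_0^\infty \Bigl|\psi''+\tfrac{N-1}{r}\psi'\Bigr|^2 r^{N-1}\,dr\]
for every radial $\psi\in H^2\cap H^1_0(\R^N\setminus K)$. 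It therefore suffices to produce radial $\psi$ violating this.

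Next, I would collect a priori information on the spherical averages. Testing the original stability against a standard cutoff $\eta_R \equiv 1$ on $B_R$, supported in $B_{2R}$, with $|\Delta\eta_R|\lesssim R^{-2}$, gives $\int_{B_R} e^u\,dx\le C\,R^{N-4}$. Because $\super v$ is positive, decreasing, and $\super v(\infty)=0$, the integrated identity
\[\super v(r) = \frac{1}{|S^{N-1}|}\int_r^\infty s^{1-N}\int_{B_s} e^u\,dx\,ds\]
holds, and combined with the cutoff estimate yields $\super v(r)\le C_0/r^2$ for large $r$. Integrating the radial identity $(r^{N-1}\super u')' = -r^{N-1}\super v$ then gives a first crude lower bound $\super u(r) \ge -\alpha\ln r-C$ for large $r$.

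The technical heart of the argument is to upgrade this into a sharp asymptotic match with the explicit radial singular solution $u_s(x)=-4\ln|x|+\ln(8(N-2)(N-4))$, which itself satisfies $\super v_s(\infty)=0$. My plan is a bootstrap: a lower bound $\super u\ge -\alpha\ln r-C$ feeds via Jensen into a lower bound $\int_{B_R}e^u\,dx\gtrsim R^{N-\alpha}$, which fed into the integral identity above gives a lower bound on $\super v$; the already-known upper bound $\super v\le C_0/r^2$ then forces $\alpha$ to improve down toward $4$. Closing the loop via ODE comparison with $u_s$, in the spirit of the radial analysis of \cite{bffg} and \cite{warnault}, should yield
\[r^2\super v(r)\longrightarrow 4(N-2),\qquad \super u(r)+4\ln r\longrightarrow \ln\bigl(8(N-2)(N-4)\bigr)\]
as $r\to\infty$.

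To close the argument I would then plug the log-scaled Hardy--Rellich extremizer $\psi_R(x)=|x|^{-(N-4)/2}\eta(\log|x|/\log R)$ into the radial stability, with $\eta$ a fixed smooth bump supported in $[1,2]$. Writing $I_R:=\int_0^\infty \psi_R(r)^2 r^{N-5}\,dr\sim c\log R$, the standard Hardy--Rellich computation gives
\[\int_0^\infty|\Delta_r\psi_R|^2\,r^{N-1}\,dr=\tfrac{N^2(N-4)^2}{16}\,I_R+o(I_R),\]
while the asymptotic above yields
\[\int_0^\infty e^{\super u(r)}\psi_R^2\,r^{N-1}\,dr=8(N-2)(N-4)\,I_R+o(I_R).\]
Since $8(N-2)(N-4)>\tfrac{N^2(N-4)^2}{16}$ exactly when $5\le N\le 12$ (the same threshold that appears in Proposition \ref{prop radial}), the radial stability is violated for $R$ sufficiently large, a contradiction. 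The principal obstacle is the sharp asymptotic identification in the third paragraph: the soft bound $\super v\lesssim 1/r^2$ is easy, but recovering the precise constant $4(N-2)$ — as opposed to some smaller number that would push the test-function estimate below the Hardy--Rellich threshold — requires a delicate ODE bootstrap together with a uniqueness argument for the asymptotic profile of the radial system, and that is where the bulk of the technical work sits.
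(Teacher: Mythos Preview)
Your strategy differs substantially from the paper's, and the gap you flag is more serious than a matter of ``technical work''. The pair $(\super u,\super v)$ does \emph{not} solve the radial ODE system: one has $-\Delta\super u=\super v$ but only the inequality $-\Delta\super v=\super{e^u}\ge e^{\super u}$, so the averages are merely a supersolution of the radial problem. The phase-plane comparison arguments of \cite{bffg}, \cite{warnault} rest on the closed autonomous system obtained after an Emden--Fowler change of variables, and that structure is unavailable here. Your bootstrap, as written, yields only $\alpha\ge4$ in a single step (the lower bound $\super v\gtrsim r^{2-\alpha}$ clashes with $\super v\le C_0 r^{-2}$ unless $\alpha\ge4$); it provides no mechanism to drive $\alpha$ \emph{down} to $4$, let alone to pin down the constant $8(N-2)(N-4)$, because Jensen points the wrong way for an upper bound on $e^{\super u}$. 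There is no reason to expect the sharp profile $u_s$ to emerge from a one-sided differential inequality, and without it your Hardy--Rellich test cannot fire --- especially near $N=12$, where the margin between $8(N-2)(N-4)$ and $N^2(N-4)^2/16$ is only about $10\%$.

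The paper avoids this obstacle altogether. It first combines Lemma~\ref{l1v} with the Moser-type bootstrap of Section~\ref{sec boot} to obtain $\int_{A_R}e^{pu}\,dx\lesssim R^{N-4p}$ for every $p<p^*$ and $\int_{A_R}v^{q}\,dx\lesssim R^{N-2q}$ for every $q<q^*=\tfrac{2N}{N-2}\alpha^*$. The dimensional restriction $5\le N\le12$ enters exactly here: it is equivalent to $q^*>N/2$, which is what allows $v$ to serve as the potential in the Serrin--Trudinger local maximum principle applied to the Kato subsolution $w=e^u$ of $-\Delta w-vw\le0$, yielding the \emph{pointwise} decay $|x|^4e^{u(x)}\to0$. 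The contradiction is then a three-line ODE computation on the averages: the decay gives $-\Delta\super v\le\tfrac12 r^{-4}$ for large $r$; integrating against $\super v(\infty)=0$ gives $\super v\le\tfrac12 r^{-2}$; hence $\super u'(r)\ge-1/r$ and $r^4e^{\super u(r)}\ge cr^3\to\infty$, which by Jensen forces $\sup_{|x|=r}|x|^4e^{u(x)}\to\infty$, contradicting the decay. No sharp constant is ever needed --- the arbitrarily small factor $\tfrac12$ (far below $4(N-2)$) is precisely what makes $\super u$ grow too fast rather than settle onto the singular profile.
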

\begin{remark}As observed in \cite{bffg}, in dimension $N=4$, applying inequality \eqref{stability} with a standard cut-off function, we easily see that if $u$ is stable outside a compact set, then $e^{u}\in L^1(\R^4)$. Thanks to the work of C.-S. Lin \cite{lin}, up to a rotation of space, $u$ must satisfy
\neweq{asympt2}
u(x)=-p(x)-4\gamma\ln\vert x\vert+c_{0}+{\mathcal O}(|x|^{-\tau})\quad\mbox{ as }|x|\rightarrow
\infty,
\endeq
where $p(x)$ is of the form \eqref{polynomial} with $\alpha_{i}\ge0$, $\gamma=\frac1{32\pi^2}\int_{\R^4}e^{u}\;dx\le 2$, $c_{0}$ and $\tau>0$ are constants. Conversely, there exist solutions of the form \eqref{asympt2}, as proved by J. Wei and D. Ye in \cite{wei-ye}. If in addition $\super v(\infty)=0$, then $\gamma =2$ and up to translation and the scaling \eqref{scaling},
$$
u(x)=-4\ln\left(1+\vert x\vert^2\right)+\ln 384,\qquad\text{for all $x\in\R^4$.}
$$
\end{remark}
Now, let us turn to bounded domains. We begin by recalling a few known results on the Gel'fand problem for the biharmonic operator, when the domain is the unit ball and the equation is supplemented with a homogeneous Dirichlet boundary condition, i.e. for $\lambda>0$, we consider the equation
$$
\left\{
\begin{aligned}
&\Delta^2 u= \lambda e^u&&\quad\mbox{ in }B,\\
&u=\vert \nabla u\vert=0&&\quad\mbox{ on }\partial B.
\end{aligned}
\right.
$$
It is known that there exists an extremal parameter $\lambda^*=\lambda^*(N)>0$ such that the problem has at least one solution (which is stable) for $\lambda<\lambda^*$, a weak stable solution $u^*$ for $\lambda=\lambda^*$ and no solution for $\lambda>\lambda^*$ (see G. Arioli, F. Gazzola, F.-C. Grunau and E. Mitidieri \cite{aggm}). The unique weak stable solution $u^*$ associated to $\lambda=\lambda^*$ is classical if and only if $1\le N\le 12$ (see the work by J. D\'avila, I. Guerra, M. Montenegro and one of the authors \cite{ddgm}, as well as a simplification due to A. Moradifam \cite{moradifam}). It is also known that if $N\ge 5$, the problem has a unique singular radial solution for some $\lambda=\lambda_{S}$ (as follows from the analysis in \cite{aggm} and \cite{ddgm}) and, as in Barenblatt's result, that it has infinitely many regular radial solutions for the same value of the parameter (see the delicate work due to J. D\'avila, I. Flores and I. Guerra \cite{dfg}). The case of general domains with Dirichlet boundary conditions is essentially unexplored, due to the lack of a comparison principle. The case of homogeneous Navier boundary conditions, namely the equation
\neweq{bi}
\left\{
\begin{aligned}
&\Delta^2 u= \lambda e^u&&\quad\mbox{ in }\Omega,\\
&u=\Delta u=0&&\quad\mbox{ on }\partial\Omega,
\end{aligned}
\right.
\endeq
where $\Omega$ is a smooth and bounded domain in $\R^N$, $N\geq 1$, seems, for now, easier to deal with.
Our results continue a story initiated by C. Cowan, P. Esposito, and N. Ghoussoub in \cite{ceg}. They can be summarized as follows.
\begin{theorem}\label{th reg}
Let $N\ge1$ and let $\Omega$ be a smoothly bounded domain of $\R^N$. Let $u^*$ be the extremal solution of \eq{bi}.
\begin{itemize}
\item If $1\le N\le 12$, then $u^*\in C^\infty(\super\Omega)$.
\item If $N\ge 13$,  then $u\in
C^\infty(\Omega\setminus\Sigma)$, where $\Sigma$ is a closed set whose
Hausdorff dimension is bounded above by
$$
{\mathcal H}_{dim}(\Sigma)\leq N-4p^*
$$
and $p^*>3$ is the largest root of the polynomial $(X-{\frac12})^3-8(X-{\frac12})+4$.
\end{itemize}
\end{theorem}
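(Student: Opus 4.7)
The plan is to obtain uniform a priori $L^p$ estimates on $e^{u_\lambda}$ for the family of minimal solutions $u_\lambda$ ($\lambda<\lambda^*$), each of which is smooth and stable, and then pass to the limit $\lambda\uparrow\lambda^*$. The estimates are produced by a combined use of the stability inequality \eqref{stability} and the equation \eqref{bi}, manipulated algebraically in a way tailored to the biharmonic operator.

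Concretely, the Navier condition $u=\Delta u=0$ on $\partial\Omega$ makes $\varphi = e^{au/2}-1$ admissible in \eqref{stability} for every $a>0$. Expanding
\[
\Delta(e^{au/2}) = \tfrac{a}{2}e^{au/2}\Delta u + \tfrac{a^2}{4}e^{au/2}|\nabla u|^2,
\]
and squaring produces an inequality that mixes $\int e^{(1+a)u}$, $\int e^{(1+a/2)u}$, $\int e^{au}(\Delta u)^2$, $\int e^{au}(\Delta u)|\nabla u|^2$ and $\int e^{au}|\nabla u|^4$. On the other hand, multiplying \eqref{bi} by $e^{au}-1$ and integrating by parts twice (the boundary contributions vanish under Navier data) yields a linear identity relating $\int e^{(1+a)u}$, $\int e^{au}(\Delta u)^2$ and $\int e^{au}(\Delta u)|\nabla u|^2$. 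Eliminating the mixed term between the two relations, and absorbing the residual $\int e^{au}|\nabla u|^4$ through a further integration by parts or a Rellich-type identity, produces a closed estimate
\[
c(a)\int_\Omega e^{(1+a)u}\,dx \le C(a,\Omega),
\]
valid whenever $c(a)>0$. Tracking the coefficients carefully, one finds that the sharp threshold is $1+a = p^*$, where $p^*$ is the largest root of the cubic $(X-1/2)^3-8(X-1/2)+4$.

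If $1\le N\le 12$, a direct check shows $p^*>N/4$, so one may pick $a$ with $1+a>N/4$ and deduce $e^{u^*}\in L^{1+a}(\Omega)$. Elliptic regularity for the biharmonic operator under Navier data then gives $u^*\in W^{4,1+a}\hookrightarrow L^\infty(\Omega)$, and a standard bootstrap yields $u^*\in C^\infty(\overline\Omega)$. If $N\ge 13$, the bound $e^{u^*}\in L^p$ is available only for $p<p^*\le N/4$, so $u^*$ may be singular on a closed set $\Sigma=\{u^*=+\infty\}$. To bound the size of $\Sigma$, my plan is Federer-style dimension reduction: around a point $x_0\in\Sigma$, rescale using \eqref{scaling} to produce a sequence of stable solutions on growing balls, extract a tangent entire stable solution retaining the same $L^p$ integrability, apply the Liouville theory developed earlier in the paper (Proposition~\ref{prop radial}, Theorem~\ref{liouville}) to characterize this tangent, and conclude by the usual capacitary/blow-up reduction that $\mathcal{H}_{\mathrm{dim}}(\Sigma)\le N-4p^*$.

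The principal obstacle is the algebraic bookkeeping that produces the sharp cubic $(X-1/2)^3-8(X-1/2)+4$: the cancellation between the stability term and the equation identity must be exact, and the surviving $|\nabla u|^4$ contribution must be reabsorbed without loss of constants, since any inefficiency translates into a strictly smaller admissible exponent and hence a larger singular dimension. In the high-dimensional case, a secondary subtlety is verifying that both stability and the $L^p$ bound on $e^{u^*}$ pass cleanly to blow-up limits on $\R^N$, so that the Liouville machinery of Theorem~\ref{liouville} applies to the tangent solutions.
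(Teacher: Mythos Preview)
Your approach differs from the paper's in both regimes, and in the high-dimensional part it contains a genuine gap.

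For $1\le N\le 12$, you test the fourth-order stability inequality \eqref{stability} directly with $\varphi=e^{au/2}-1$ and combine with the equation multiplied by $e^{au}-1$. That is essentially the original strategy of Cowan--Esposito--Ghoussoub \cite{ceg}, which yielded only $N\le 8$ (later $N\le 10$ in \cite{cg}); the ``Rellich-type identity'' you invoke to reabsorb $\int e^{au}|\nabla u|^4$ is exactly where those methods lose constants, so your assertion that the cubic $(X-\tfrac12)^3-8(X-\tfrac12)+4$ emerges is unjustified. The paper takes a different route: it first derives an \emph{interpolated second-order} stability inequality $\int_\Omega e^{u/2}\varphi^2\le\int_\Omega|\nabla\varphi|^2$ (Lemma~\ref{lem interp}), rewrites \eqref{bi} as the system $-\Delta u=v$, $-\Delta v=e^u$, and tests the two equations with $e^{\alpha u}-1$ and $v^{2\alpha-1}$ respectively, each time using the second-order inequality rather than the fourth-order one. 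A H\"older step couples the two resulting estimates and produces the condition $\alpha^3<8\alpha-4$, i.e.\ the stated cubic in $\alpha=p-\tfrac12$. Neither the second-order inequality nor the auxiliary unknown $v$ appears in your plan.

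For $N\ge 13$, you propose Federer-type blow-up at a singular point and then an appeal to Proposition~\ref{prop radial} and Theorem~\ref{liouville}. Both results are stated and proved only for $5\le N\le 12$; worse, in dimension $N\ge 13$ stable entire solutions with $\overline v(\infty)=0$ \emph{do} exist (the singular radial solution $-4\ln|x|+c$, see the remark following Theorem~\ref{nonradialstable}), so no Liouville-type nonexistence is available to close a dimension-reduction argument. The paper proceeds without any blow-up: the bootstrap of Section~\ref{sec boot} gives the Morrey-type bound $\int_{B_r(x)}e^{qu}\lesssim r^{N-4q}$ for all $q<p^*$; Lemma~\ref{e} then shows, via a harmonic/zero-boundary splitting of $u$ and H\"older interpolation against this Morrey bound, that smallness of the rescaled energy $E(x,r)=r^{4p-N}\int_{B_r(x)}e^{pu}$ at one scale propagates to all smaller scales and nearby centers. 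An $\epsilon$-regularity step (Moser--Trudinger, \cite[Lemma~7.20]{GT}) converts smallness of $E$ into local boundedness of $u$, whence $\Sigma\subset\{x:\limsup_{r\to 0}E(x,r)>0\}$, and standard measure theory yields $\mathcal H^{N-4p}(\Sigma)=0$ for every $p<p^*$.
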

\begin{remark}Theorem \ref{th reg} was first proved for $1\le N\le 8$ by C. Cowan, P. Esposito, and N. Ghoussoub, see \cite{ceg}. As we were completing this work, we learnt that C. Cowan and N. Ghoussoub just improved their result to $1\le N\le10$, see \cite{cg}. The question of partial regularity in large dimension was studied by K. Wang for the classical Gel'fand problem (see \cite{wang}). Unfortunately, we could not understand a part of his proof\footnote{the relation between the exponents in his interpolation inequality (3.18) seems incorrect.}, and our approach is somewhat different. We expect that the computed exponent $p^*$ is not optimal. Similarly, as observed by P. Esposito \cite{esposito}, the methods that we develop here will not yield the expected critical curve (resp. dimension) for the Lane-Emden system (resp. for the MEMS problem), for which new ideas must be found. It could be interesting to see if they improve the results obtained in the previously mentioned references \cite{ceg}, \cite{cg}, as well as the works of J. Wei and D. Ye \cite{wy2}, of J. Wei, X. Xu and W. Yang \cite{wxx}, and of C. Cowan \cite{cowanjuly}.
\end{remark}
Finally, our Liouville-type result, Theorem \ref{liouville}, will be used to prove the following.
\begin{theorem}\label{th reg2}
Let $5\le N\le12$ and let $\Omega$ be a smoothly bounded domain of $\R^N$. Assume in addition that $\Omega$ is convex. Let $u\in C^4(\super\Omega)$ be any classical solution of \eq{bi} and let $v=-\Delta u$. There exists a compact subdomain $\omega\subset\Omega$ such that if
\neweq{3061bis}
\int_{B_r(x_{0})} v\;dx\leq K r^{N-2},
\endeq
for every ball $B_{r}(x_{0})\subset\omega$ and for some constant $K>0$,
then, there exists a number $C$ depending only on $\lambda$, $\Omega$, $N$, $K$ and the Morse index of $u$, such that
$$
\| u \|_{L^\infty(\Omega)}\le C.
$$
\end{theorem}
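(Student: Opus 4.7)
The plan is to argue by contradiction via a blow-up analysis, using Theorem \ref{liouville} to rule out the putative limit profile. Suppose the conclusion fails: along a sequence, there exist solutions $u_k$ of \eq{bi} with Morse index bounded by some fixed $m$, all satisfying \eq{3061bis} with a common constant $K$, and yet $\|u_k\|_{L^\infty(\Omega)}\to+\infty$. The first ingredient is a boundary estimate: by a moving-plane argument adapted to the fourth-order Navier problem on a convex domain (in the spirit of Cowan--Esposito--Ghoussoub \cite{ceg}), one produces a compact subdomain $\omega\subset\Omega$ in the complement of which every solution of \eq{bi} is uniformly bounded. Consequently, any maximum point $x_k$ of $u_k$ lies inside $\omega$ with $\dist(x_k,\parti\Omega)\ge\delta>0$, and, by enlarging $\omega$ slightly if necessary, with positive distance to $\parti\omega$ as well.

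Set $M_k:=u_k(x_k)\to+\infty$ and, following the scaling \eq{scaling}, introduce $\lambda_k:=(\lambda e^{M_k})^{-1/4}$ and $U_k(y):=u_k(x_k+\lambda_k y)-M_k$. Then $\Delta^2 U_k=e^{U_k}$ on $B_{\delta/\lambda_k}(0)$, with $U_k\le U_k(0)=0$. Since $e^{U_k}\le 1$, standard elliptic regularity yields uniform $C^{4,\a}_{\mathrm{loc}}$ bounds, and up to extraction $U_k\to U$ in $C^{4}_{\mathrm{loc}}(\R^N)$, where $U$ solves \eq{gelfandbe} and attains its maximum $0$ at the origin. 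Because the stability quadratic form in \eq{stability} is invariant under \eq{scaling}, the Morse index of $U_k$ equals that of $u_k$; passing to the limit, $U$ has Morse index at most $m$, hence it is stable outside a sufficiently large compact set.

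The crux is now to extract from \eq{3061bis} the information that $\super V(\infty)=0$, where $V:=-\Delta U$. The rescaled function $V_k(y):=-\Delta U_k(y)=\lambda_k^{\,2}\,v_k(x_k+\lambda_k y)$ satisfies, for $k$ large enough so that $B_{\lambda_k R}(x_k)\subset\omega$,
\[
\int_{B_R}V_k\,dy
=\lambda_k^{\,2-N}\int_{B_{\lambda_k R}(x_k)}v_k\,dx
\le\lambda_k^{\,2-N}\,K(\lambda_k R)^{N-2}
=K\,R^{N-2}.
\]
Passing to the limit gives $\int_{B_R}V\,dy\le KR^{N-2}$ for every $R>0$. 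Because $V>0$ (maximum principle applied to the limit, using $-\Delta V=e^{U}>0$) and superharmonic, its spherical average $\super V$ is a non-negative, non-increasing function of $r$. If $\super V(\infty)=c>0$, integration in polar coordinates would yield $\int_{B_R}V\,dy\ge c\,|B_R|\sim R^{N}$, contradicting the above bound. Hence $\super V(\infty)=0$.

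Theorem \ref{liouville} then forces $U$ to be unstable outside every compact set, in contradiction with the finite Morse index property of $U$ established above. The principal obstacle in executing this plan is the boundary $L^\infty$ estimate that carves out the compact subdomain $\omega$; the rest is a blow-up-plus-limit-identification, whose one subtlety is that the exponent $N-2$ in \eq{3061bis} is tuned precisely so that the rescaled bound is scale-invariant, which is exactly what allows us to conclude $\super V(\infty)=0$ and to invoke the Liouville theorem.
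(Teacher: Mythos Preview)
Your overall strategy---blow-up around the maximum, compactness, passage to a limiting entire solution of finite Morse index, and a contradiction via Theorem~\ref{liouville} once $\overline V(\infty)=0$ is secured---matches the paper's exactly. The boundary estimate producing $\omega$ and the scale-invariance computation showing $\int_{B_R}V\le KR^{N-2}$ are both correct.

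The gap is in your compactness step. You assert that ``since $e^{U_k}\le 1$, standard elliptic regularity yields uniform $C^{4,\alpha}_{\mathrm{loc}}$ bounds.'' This does not follow: interior estimates for $\Delta^2 w=f$ require, in addition to a bound on $f$, some control on $w$ itself (an $L^1$ or $L^p$ bound on a larger ball). The conditions $U_k\le 0$, $U_k(0)=0$ and $0\le e^{U_k}\le 1$ are \emph{not} enough to prevent $U_k$ from diverging to $-\infty$ locally. Concretely, the radial entire solutions $u_\beta$ of Proposition~\ref{prop radial} satisfy $u_\beta(0)=0$, $u_\beta\le 0$ (by \eqref{es1}), $\Delta^2 u_\beta=e^{u_\beta}\in[0,1]$, yet $u_\beta(r)\le -\tfrac{\beta-\beta_0}{2N}r^2\to-\infty$ as $\beta\to\infty$ for each fixed $r>0$. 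So without further input the rescaled sequence could fail to be locally bounded.

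The paper closes this gap precisely by using \eqref{3061bis} \emph{twice}: once for compactness, once for $\overline V(\infty)=0$. Rescaling \eqref{3061bis} gives $\int_{B_R}V_k\le KR^{N-2}$, so $V_k$ is bounded in $L^1(B_R)$. One then decomposes $V_k=V_k^1-V_k^2$ on $B_R$ with $-\Delta V_k^1=e^{U_k}$, $V_k^1|_{\partial B_R}=0$ (hence $V_k^1$ uniformly bounded since $e^{U_k}\le 1$), and $V_k^2$ harmonic; the $L^1$ bound on $V_k$ transfers to $V_k^2$, and the mean-value property upgrades this to an $L^\infty$ bound on $B_{R/2}$. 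Repeating the decomposition for $-\Delta U_k=V_k$ and applying Harnack's inequality to the harmonic part (which is nonnegative, with value $U_k^1(0)$ at the origin) yields uniform local bounds on $U_k$. After this, your argument goes through verbatim.
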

\begin{definition}
In the above, the Morse index of a solution $u$ is the number of negative eigenvalues of the linearized operator $L=\Delta^2-\lambda e^{u}$ with domain
$D(L)=\{u\;:\; u,\Delta u\in H^2(\Omega)\cap H^1_{0}(\Omega)\}$. According to standard spectral theory, this number is finite.
\end{definition}
\begin{remark}
If $u$ is stable, then \eqref{3061bis} automatically holds for some constant $K$ depending only on $\Omega, N,$ and $\omega$. See Lemma \ref{30juin2012}. We do not know whether this remains valid for solutions of bounded Morse index.
In addition, it would be interesting to know how the number $C$ depends on the Morse index of $u$. See the work of X.-F. Yang \cite{yang} for a result in this direction, in a subcritical setting.
\end{remark}

\noindent {\bf Notation}. For any given function $f$, $\super f$
connotes the spherical average of $f$. We write $f \lesssim g$ (resp. $f \gtrsim g$), when there exists
a numerical constant $C$ such that $f\leq Cg$ (resp. $f\geq C g$). $B_R(x)$ denotes the ball centered at $x$ and of radius $R$.
For shorthand, $B_R=B_R(0)$ and $A_R$ is the annulus of radii $R$ and $2R$.}

\section{Classification of stable radial solutions}
We prove here Proposition \ref{prop radial}.
Take $\beta\ge \beta_{0}$ and let $u=u_{\beta}$ be the radial solution such that $u(0)=0$ and $v(0)=\beta$. 
%
We claim that if $\b$ is large enough, then $u_\b$ is stable. We shall use the following inequality, found in \cite{agg}:
\begin{equation}\label{es1}
u_{\b}\leq -\frac{\beta -\b_0}{2N}r^2\quad \text{for all } r>0.
\end{equation}
Simply choose $\overline \b>\b_0$ such that
\begin{equation}\label{es2}
e^{-\frac{\overline\b-\b_{0}}{2N}r^2}\leq \frac{N^2(N-4)^2}{16r^4} \quad \text{for all } r>0.
\end{equation}
Combining \eqref{es1}, \eqref{es2}, and  the Hardy-Rellich inequality, we deduce that $u_\b$ is stable for $\beta\ge\overline\beta$. So, we may define $\Lambda=\{\b>\b_0 \ |\ u_\b \mbox{ is stable}\}$ and $\b_1= \inf \Lambda$. By standard ODE theory,  one easily proves that $\b_1= \min \Lambda$. According to \cite{bffg}, $u_{\beta_{0}}$ is unstable. So, $\beta_{1}>\beta_{0}$.
Also, by a result in \cite{agg}, solutions are ordered : if $\tilde \b >\beta$, then $u_{\tilde \b}\leq u_\b$. So, $\Lambda$ is the interval $[\beta_{1},+\infty)$. \hfill\qed
\section{Construction of nonradial solutions}
We present here the proof of Theorem \ref{nonradialstable}.
Take a polynomial $p$ of the form \eqref{polynomial}. Without loss if generality, we may assume that {$x^{0}=0$}. 
 We look for a solution $u$ of  the form $u=-p(x)+z(x)$, so that $z$ and $w=-\Delta z$ satisfy
\neweq{nonrad}
\left\{
\begin{aligned}
-&\Delta z=w&&\quad\mbox{ in }\R^N,\\
-&\Delta w=e^{-p(x)}e^z&&\quad\mbox{ in }\R^N.
\end{aligned}
\right.
\endeq

For $x\in \R^N$, let
$$
Z(x)=(1+|x|^2)^{2-N/2}\,,\quad W(x)=(1+|x|^2)^{1-N/2}.
$$
We claim that $(Z,W)$ is a super-solution of \eq{nonrad}. Indeed, straightforward calculations yield
$$
\begin{aligned}
-\Delta Z&\geq 2(N-4)W&&\quad\mbox{ in }\R^N,\\
-\Delta W&=N(N-2)(1+|x|^2)^{-1-N/2}&&\quad\mbox{ in }\R^N.
\end{aligned}
$$
Since $Z \leq 1$ and since we assumed that $p(x)\geq (1+N/2)|x|^2$ in $\R^N$, we have
$$
-\Delta W\geq (1+|x|^2)^{-1-N/2} e^{Z}\geq e^{-(1+N/2)|x|^2}e^{Z}\geq  e^{-p(x)}e^{Z}\quad\mbox{ in }\R^N,
$$
which proves our claim.

Since the system is cooperative, and $(0,0)$ and $(Z,W)$ form a pair of ordered sub- and super-solutions, we obtain the existence of a solution of
\eq{nonrad} which further satisfies $0<z\leq Z$ and $0<w\leq W$ in $\R^N$. Hence, $u(x):=-p(x)+z(x)$ is a solution of \eq{gelfandbe}.

To prove that  $u$ is stable outside a compact set, let us observe again that $Z\leq 1$ in $\R^N$. So, we can find $\rho>0$ large such that
\neweq{hardy1}
e^{u(x)}\leq e^{Z(x)}e^{-p(x)}\leq e^{1-p(x)}\leq  \frac{N^2(N-4)^2}{16|x|^4},\quad\text{for all $|x|> \rho$.}
\endeq
By the Hardy-Rellich inequality, $u$ is stable outside $\overline{B_{\rho}}$.
Remark now that if $\min_{i=1\dots N}\alpha_i>0$ is large enough then \eq{hardy1} is valid for all $x\in \R^N$, so $u$ is stable in $\R^N$. This ends the proof of Theorem \ref{nonradialstable}.
\hfill\qed

\section{Regularity of the extremal solution in dimension $1\le N\le 12$}
In this section, we prove the first part of Theorem \ref{th reg}.
Let $u$ denote the minimal solution to \eqref{bi} associated to a parameter $\lambda\in(\lambda^*/2,\lambda^*)$.
Up to rescaling, we may assume that $\lambda=1$.
The first ingredient in our proof is the following consequence of the stability inequality \eqref{stability}: 
\neweq{weaks}
 \int_\Omega e^{\frac u2} \varphi^2\;dx\le \int_\Omega |\nabla \varphi|^2\;dx  \quad \mbox{for all } \varphi\in  H^1_{0}(\Omega).
\endeq
\eqref{weaks} was proved independently\footnote{The result was first made available online by C. Cowan and N. Ghoussoub. See also the Appendix for another result found by several authors.} by C. Cowan and N. Ghoussoub in \cite{cg} and A. Farina, B. Sirakov, and one of the authors in \cite{dfs}. See e.g. Lemma \ref{lem interp} below for its proof.
Now, Let us write the problem \eq{bi} as a system in the following way:
\neweq{systembi}
\left\{
\begin{aligned}
-&\Delta u=v&&\quad\mbox{ in }\Omega,\\
-&\Delta v=e^u&&\quad\mbox{ in }\Omega,\\
&u=v=0&&\quad\mbox{ on }\partial\Omega.
\end{aligned}
\right.
\endeq
Fix $\alpha>{\frac12}$ and multiply the first equation in \eq{systembi} by $e^{\alpha u}-1$. Integrating over $\Omega$, we obtain
$$
\int_{\Omega} \left( e^{\alpha u}-1\right)v\;dx = \alpha \int_{\Omega}e^{\alpha u}\vert\nabla u\vert^2\;dx=
\frac4\alpha \int_{\Omega}\left\vert\nabla\left(e^{\frac{\alpha u}2}-1\right)\right\vert^2\;dx.
$$
By \eqref{weaks},
$$
\int_{\Omega}e^{{\frac{u}{2}}}\left(e^{\frac{\alpha u}{2}}-1\right)^2\;dx\le \int_{\Omega}\left\vert\nabla\left(e^{\frac{\alpha u}2}-1\right)\right\vert^2\;dx.
$$
Combining these two inequalities, we deduce that
\begin{equation} \label{2161}
\int_{\Omega}e^{{\frac{u}{2}}}\left(e^{\frac{\alpha u}{2}}-1\right)^2\;dx \le \frac{\alpha}{4}\int_{\Omega}\left(e^{\alpha u}-1\right)v\;dx.
\end{equation}
Similarly, multiply the second equation in \eqref{systembi} by $v^{2\alpha-1}$ and use \eqref{weaks} to deduce that
\begin{equation} \label{2162}
\int_{\Omega}e^{{\frac{u}{2}}}v^{2\alpha}\;dx\le \frac{\alpha^2}{2\alpha-1}\int_{\Omega}e^{u}v^{2\alpha-1}\;dx.
\end{equation}
By H\"older's inequality,
\begin{align*}
\int_{\Omega}e^{u}v^{2\alpha-1}\;dx &\le \left(\int_{\Omega} e^{{\frac{u}{2}}}v^{2\alpha}\;dx\right)^{\frac{2\alpha-1}{2\alpha}}\left(\int_{\Omega} e^{{\frac{u}{2}}}e^{\alpha u}\;dx\right)^{\frac1{2\alpha}}\quad\text{ and }\\
\int_{\Omega}e^{\alpha u}v\;dx &\le \left(\int_{\Omega} e^{{\frac{u}{2}}}v^{2\alpha}\;dx\right)^{\frac{1}{2\alpha}}\left(\int_{\Omega} e^{{\frac{u}{2}}}e^{\alpha u}\;dx\right)^{\frac{2\alpha-1}{2\alpha}}.\\
\end{align*}
Plugging these inequalities in \eqref{2162} and \eqref{2161} respectively, we deduce that
\begin{align*}
\left(\int_{\Omega}e^{{\frac{u}{2}}}v^{2\alpha}\;dx\right)^\frac1{2\alpha} &\le \frac{\alpha^2}{2\alpha-1}\left(\int_{\Omega} e^{{\frac{u}{2}}}e^{\alpha u}\;dx\right)^{\frac{1}{2\alpha}}\quad\text{ and }\\
\int_{\Omega}e^{{\frac{u}{2}}}\left(e^{\frac{\alpha u}{2}}-1\right)^2\;dx &\le \frac\alpha4\left(\int_{\Omega} e^{{\frac{u}{2}}}v^{2\alpha}\;dx\right)^{\frac{1}{2\alpha}}\left(\int_{\Omega} e^{{\frac{u}{2}}}e^{\alpha u}\;dx\right)^{\frac{2\alpha-1}{2\alpha}}.\\
\end{align*}
Multiplying these inequalities, it follows that
$$
\int_{\Omega}e^{{\frac{u}{2}}}\left(e^{\frac{\alpha u}{2}}-1\right)^2\;dx \le \frac{\alpha^3}{8\alpha-4}\int_{\Omega}e^{(\alpha+{\frac12})u}\;dx
$$
so that
$$
\left(1 - \frac{\alpha^3}{8\alpha-4}\right)\int_{\Omega} e^{{\frac{u}{2}}}e^{\alpha u}\;dx\le 2
\int_{\Omega}e^{\frac{\alpha+1}{2}u}\;dx.
$$
Apply H\"older's inequality to the right-hand side. Then,
$$
\int_{\Omega}e^{\frac{\alpha+1}{2}u}\;dx\le\left(\int_{\Omega}e^{\frac{2\alpha+1}{2}u}\;dx\right)^{\frac{\alpha+1}{2\alpha+1}}\left\vert\Omega\right\vert^{\frac{\alpha}{2\alpha+1}}
$$
and so
\begin{equation} \label{2163}
\left(1 - \frac{\alpha^3}{8\alpha-4}\right)\left(\int_{\Omega} e^{\frac{2\alpha+1}{2}u}\;dx\right)^{\frac{\alpha}{2\alpha+1}}\le 2\left\vert\Omega\right\vert^{\frac{\alpha}{2\alpha+1}}.
\end{equation}
Let $\alpha^*>5/2$ denote the largest root of the polynomial $X^3-8X+4$. We have just proved that $e^u$ is uniformly bounded in $L^p(\Omega)$ for every $p<p^*=\alpha^*+{\frac12}$. In particular, $e^u$ is bounded in $L^p(\Omega)$ for some $p>3$. Using elliptic regularity applied to \eqref{bi}, this implies that $u^*$ is bounded, hence smooth whenever $N\le12$. \hfill\qed

\section{Growth of the $L^1$ norm}
This section and the next provide preparatory results that will be used both for the proof of the Liouville-type theorem and the partial regularity result.
We begin with the case of $\R^N$.
\begin{lemma}\label{l1v}
Assume $N\geq 5$ and let $u$ be a solution of \eqref{gelfandbe} which is stable (resp. stable outside the ball $\super{B_{R_{0}}}$). Let $v=-\Delta u$, $\super v$ its spherical average, and assume that $\super v(\infty)=0$. Let $B_{R}$ denote the ball of radius $R$ (resp. the annulus of radii $R$ and $2R$). Then, there exists a constant $C$ depending only on $N$ (resp. on $N, u, R_{0}$) such that
\begin{equation}\label{l1ev}
\int_{A_{R}} v \; dx\le C R^{N-2}\quad\text{for every $R>0$ (resp. $R>R_{0}$).}
\end{equation}
\end{lemma}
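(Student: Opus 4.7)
The overall strategy is to first establish the pointwise decay $\super v(R)\lesssim R^{-2}$ on spherical averages, by combining the stability inequality \eqref{stability} with the identity $-\Delta v=e^u$ and the hypothesis $\super v(\infty)=0$; the claimed $L^1$ estimate then follows from a coarea computation.

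The first step is to use \eqref{stability} with a quadratic cut-off to control $\int_{B_r} e^u\,dx$. In the globally stable case, choose $\varphi\in C_c^\infty(\R^N)$ with $\varphi\equiv 1$ on $B_r$, $\supp\varphi\subset B_{2r}$, and $|\Delta\varphi|\lesssim r^{-2}$; this immediately yields $\int_{B_r} e^u\,dx\lesssim r^{N-4}$. In the stable-outside-$\super{B_{R_0}}$ case, I would apply the same argument to a sequence of cut-offs localized in dyadic annuli $\{R_0 2^k\le|x|\le R_0 2^{k+2}\}$ for $k\ge 0$, sum the resulting estimates over $k$, and absorb the finite quantity $\int_{B_{R_0}}e^u\,dx$ into the constant. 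Since $N\ge 5$, the geometric series is dominated by its largest term, so one still obtains $\int_{B_r}e^u\,dx\lesssim r^{N-4}$ for $r>R_0$, now with a constant depending on $u$ and $R_0$.

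The second step is to write the elementary ODE satisfied by $\super v$. Differentiating the spherical average under the integral sign and applying the divergence theorem to $\Delta v=-e^u$ gives
\[
\omega_N R^{N-1}\super v'(R)=-\int_{B_R}e^u\,dx,
\]
where $\omega_N$ is the area of the unit sphere in $\R^N$; in particular $\super v$ is monotone nonincreasing. Integrating from $R$ to $\infty$ and using $\super v(\infty)=0$ yields the representation
\[
\super v(R)=\int_R^\infty\frac{1}{\omega_N r^{N-1}}\int_{B_r}e^u\,dx\,dr\lesssim\int_R^\infty r^{-3}\,dr\lesssim R^{-2},
\]
thanks to the first step. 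A coarea argument then completes the proof:
\[
\int_{A_R}v\,dx=\omega_N\int_R^{2R}s^{N-1}\super v(s)\,ds\lesssim\int_R^{2R}s^{N-3}\,ds\lesssim R^{N-2},
\]
which is \eqref{l1ev}.

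The main delicate point is the localization in the stable-outside-compact-set case: the stability inequality is only available for test functions vanishing on $\super{B_{R_0}}$, so the estimate on $\int_{B_r}e^u\,dx$ must be pieced together from localized stability estimates on a dyadic family of annuli, with the contribution of $B_{R_0}$ absorbed into a constant depending on $u$ and $R_0$. Once the ODE for $\super v$ is in hand, the remainder is purely bookkeeping.
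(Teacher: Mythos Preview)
Your argument is correct and follows essentially the same route as the paper: stability gives $\int_{B_r}e^u\lesssim r^{N-4}$, the ODE for $\super v$ then yields $\super v(R)\lesssim R^{-2}$ via integration from $R$ to $\infty$ using $\super v(\infty)=0$, and polar coordinates finish the job. Two minor remarks: in the stable-outside-a-compact-set case the paper uses a single cut-off vanishing on $B_{R_0}$ and equal to $1$ on $B_r\setminus B_{R_0+1}$ rather than your dyadic sum (either works); and the paper's proof additionally establishes $v>0$ everywhere (by a short contradiction argument using Jensen and stability), a fact not needed for \eqref{l1ev} itself but invoked later in the paper.
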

\begin{proof}
We claim that $v>0$. Since the equation is invariant under translation, it suffices to prove that $v(0)>0$. Let $\super v$ be the spherical average of $v$ and assume by contradiction that $v(0)=\ov(0)\leq 0$.
We have
\begin{equation}\label{avsystem}
\left\{
\begin{array}{l l l}
-\Delta \ov=\overline{e^u} & \mbox{ in } \R^N,\\
-\Delta \ou=\ov &  \mbox{ in } \R^N.
\end{array}
\right.
\end{equation}
In particular, $-\Delta \ov=-r^{1-N}(r^{N-1}v')'> 0$, and so $\ov$ is a decreasing function of $r>0$.
Since we assumed that $\ov(0)\leq 0$, it follows that $ \ov(r)< 0$ for all $r>0$.
So, $\ou$ is subharmonic. It follows that $\ou$ is an increasing function of $r>0$. In particular, $\ou$ is bounded below and so, given $R>2R_{0}$,
\begin{equation} \label{yom} \int_{B_{2R}\setminus B_{R}} e^{\ou}\;dx \geq e^{u(0)}\int_{B_{2R}\setminus B_{R}}\;dx \gtrsim  R^N.\end{equation}
Take a cut-off function $\var\in C^2_{c}(B_{4}\setminus B_{{\frac12}})$, with $0\le\var\le1$ in $\R^N$ and $\var\equiv1$ in $B_2\setminus B_{1}$.
Applying Jensen's inequality and the stability inequality \eqref{stability} with test function $\var(x/R)$, we find
$$\int_{B_{2R}\setminus B_{R}} e^{\ou} dx \leq \int_{B_{2R}\setminus B_{R}} \overline{e^{u}}dx \lesssim R^{N-4}$$
which contradicts \eqref{yom},  if $R$ is chosen large enough. Hence, $v(0)>0$.
Now, if $u$ is stable, take $r>0$, take a standard cut-off function $\varphi\in C^2_{c}(B_{2})$ such that $0\le\varphi\le1$ and $\varphi=1$ in $B_{1}$. Apply the stability inequality \eqref{stability} with test function $\varphi(x/r)$ to get
\begin{equation} \label{ama0}
\int_{B_{r}}e^{u}\;dx\lesssim r^{N-4}.
\end{equation}
If $u$ is only stable outside $\super{B_{R_{0}}}$, fix $r>2R_{0}$ and take a cut-off function $\varphi\in C^2_{c}(\R^N)$ such that $\varphi\equiv 0$ in ${B_{R_{0}}}$, $\varphi\equiv 1$ in $B_{r}\setminus B_{R_{0}+1}$, $\varphi\equiv 0$ outside $B_{2r}$, and $\vert\Delta\var\vert\lesssim r^{-2}$ in $B_{2r}\setminus B_{r}$. Using the stability inequality \eqref{stability} with this test function we find that
\begin{equation} \label{ama}
\int_{B_{r}}e^{u}\;dx= \int_{B_{R_{0}+1}}e^{u}\;dx + \int_{B_{r}\setminus B_{R_{0}+1} }e^{u}\;dx\lesssim 1 + r^{N-4}\lesssim r^{N-4}.
\end{equation}
Now, we rewrite the first equation in \eqref{avsystem} as
$$-(r^{N-1}\ov')'=r^{N-1}\overline{e^u}.$$
Integrate on $(0,r)$. By \eqref{ama0}, which holds for every $r>0$ if $u$ is stable (resp. by \eqref{ama}, which holds for $r>2R_{0}$ if $u$ is stable outside $\super{B_{R_{0}}}$),
$$-r^{N-1}\ov'(r)=\int_0^r t^{N-1}\overline{e^u}dt\lesssim r^{N-4}.$$
We integrate once more between $R$ and $+\infty$. Since $\super v(\infty)=0$, we obtain
$$\ov(R)\lesssim R^{-2}.$$
Clearly, \eqref{l1ev} follows.
\end{proof}
Now, we turn to an analogous result on bounded domains. {Let us recall that the extremal solution
is stable.}

\begin{lemma}\label{30juin2012}
Let $N\ge3$ and let $u$ be the extremal solution of \eq{bi}. 
Set $v=-\Delta u$.
Fix $x_{0}\in\Omega$ and let $R_{0}=\text{\rm dist}(x_{0},\partial\Omega)/2$. Then, there exists a constant $C>0$ depending only on $N$, $\Omega$, and $R_{0}$ such that for all $r<R_{0}$,
\neweq{3061}
\int_{B_r(x_{0})} v\;dx\leq C r^{N-2}.
\endeq
\end{lemma}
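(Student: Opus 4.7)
The plan is to adapt the argument of Lemma \ref{l1v} to the bounded domain setting, with the role of the decay condition $\super v(\infty)=0$ replaced by a global $L^1$-integrability of $v$. First observe that $v\geq 0$ in $\Omega$ (by the maximum principle applied to $-\Delta v=e^u$ with $v|_{\partial\Omega}=0$) and that $v$ is superharmonic, so its spherical average $\super v(\rho):=\fint_{\partial B_\rho(x_0)}v\,d\sigma$ is a decreasing function of $\rho\in(0,2R_0)$.

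Since $u$ is the extremal solution, it is stable, so \eqref{stability} holds. A standard radial cutoff $\varphi\in C^2_c(B_{2r}(x_0))$ with $\varphi\equiv 1$ on $B_r(x_0)$ and $|\Delta\varphi|\lesssim r^{-2}$ is admissible in \eqref{stability} because $2r<2R_0=\mathrm{dist}(x_0,\partial\Omega)$, so the argument leading to \eqref{ama0} gives the local bound $\int_{B_r(x_0)}e^u\,dx\lesssim r^{N-4}$ for every $r<R_0$. Writing $-\Delta v=e^u$ in its radial form around $x_0$, $-(\rho^{N-1}\super v'(\rho))'=\rho^{N-1}\overline{e^u}(\rho)$, and integrating once, as in the proof of Lemma \ref{l1v}, yields $-\rho^{N-1}\super v'(\rho)\lesssim\rho^{N-4}$; a second integration over $(r,R_0)$ then gives
\begin{equation*}
\super v(r)\leq\super v(R_0)+Cr^{-2},\qquad 0<r<R_0,
\end{equation*}
with $C$ depending only on $N$. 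Combined with the identity $\int_{B_r(x_0)}v\,dx=|S^{N-1}|\int_0^r\rho^{N-1}\super v(\rho)\,d\rho$ and the bound $r^N\leq R_0^2\,r^{N-2}$ valid for $r<R_0$, this reduces \eqref{3061} to an upper bound on $\super v(R_0)$ depending only on $N,\Omega,R_0$.

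The main obstacle is producing this bound on $\super v(R_0)$, as the argument of Lemma \ref{l1v} relied here on the decay of $\super v$ at infinity, which has no direct analog. I would argue that $\|v\|_{L^1(\Omega)}\leq C(N,\Omega)$, a standard consequence of the extremal character of $u$: testing \eqref{stability} against the torsion function $T$ of $\Omega$ (which satisfies $-\Delta T=1$, $T|_{\partial\Omega}=0$, $T\in C^\infty(\overline\Omega)$) yields $\int_\Omega e^uT^2\,dx\leq|\Omega|$, which combined with the local boundedness of the extremal solution near $\partial\Omega$ gives $e^u\in L^1(\Omega)$; the $L^1$-bound on $v$ then follows from elliptic $L^1$-theory applied to $-\Delta v=e^u$ with $v|_{\partial\Omega}=0$. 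Since $\super v$ is decreasing, $\super v(\rho)\geq \super v(R_0)$ for $\rho\leq R_0$, hence
\begin{equation*}
\super v(R_0)\leq \frac{c_N}{R_0^N}\int_{B_{R_0}(x_0)}v\,dx\leq \frac{c_N}{R_0^N}\|v\|_{L^1(\Omega)}\leq C(N,\Omega,R_0),
\end{equation*}
and the estimate \eqref{3061} follows.
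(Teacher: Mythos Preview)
Your argument is correct and takes a genuinely different route from the paper's. The paper proceeds via a duality argument: it first constructs a piecewise radial test function $\psi$ (equal to $|x|^{1-N/2}$ on the annulus $r<|x|<2R_0$) and plugs it into the stability inequality to obtain the weighted estimate $\int_{[r<|x|<R_0]}e^u|x|^{2-N}\,dx\lesssim r^{-2}$; it then solves $-\Delta\varphi_r=\zeta(\cdot/r)$ in $\Omega$, bounds $\varphi_r$ pointwise via Newtonian-potential estimates, and integrates $-\Delta v=e^u$ against $\varphi_r$ to get $\int_{B_r}v\leq\int_\Omega e^u\varphi_r$, which is split into three regions and controlled using the weighted estimate. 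Your approach instead transplants the spherical-average argument of Lemma~\ref{l1v} directly: the local stability bound $\int_{B_\rho}e^u\lesssim\rho^{N-4}$ feeds into the radial ODE for $\super v$, and the missing boundary information $\super v(\infty)=0$ is replaced by a bound on $\super v(R_0)$ coming from $\|v\|_{L^1(\Omega)}$. This is shorter and avoids both the special test function and the Green-function estimates entirely. The only soft step is your derivation of $e^u\in L^1(\Omega)$ via boundary regularity of the extremal solution; this is true but you could bypass it cleanly by pointing instead to inequality~\eqref{2163} (valid for any $\alpha\in(\tfrac12,\tfrac52]$), which already yields $\int_\Omega e^u\,dx\le C(N,\Omega)$ directly, as the paper does in its Step~4.
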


\proof Without loss of generality, we may assume that $x_{0}=0$ and $\lambda^*=1$.

\noindent{\bf Step 1.} There exists a constant $C=C(N,R_{0})$ such that for all $r\in(0,R_{0})$,
\begin{equation} \label{3062}
\int_{[r<\vert x\vert<R_{0}]}e^{u}\vert x\vert^{2-N}\;dx\le C r^{-2}.
\end{equation}
To see this, consider the function $\psi:B_{2R_{0}}\to\R^N$ given by
\begin{equation} \label{3063}
\psi(x)=\left\{
\begin{aligned}
a-b\vert x\vert^2&\quad\text{if $\vert x\vert<r$,}\\
\vert x\vert^{1-N/2}&\quad\text{if $r\le\vert x\vert<2R_{0}$,}
\end{aligned}
\right.
\end{equation}
where the constants $a=\frac{N+2}{4}r^{1-N/2}$ and $b=\frac{N-2}{4}r^{-1-N/2}$ are chosen so that $\psi$ is $C^1$ and piecewise $C^2$. Take a standard cut-off function
\begin{equation} \label{standard}
\zeta\in C^2_{c}(B_{2}) \quad\text{such that $0\le\zeta\le1$ and $\zeta=1$ on $B_{1}$},
\end{equation}
so that $\varphi(x)=\psi(x)\zeta(x/R_{0})$ belongs to $H^1_{0}(\Omega)\cap H^2(\Omega)$.

Since $u$ is stable, we have
\begin{equation} \label{3064}
\int_{[r<\vert x\vert<R_{0}]}e^{u}\vert x\vert^{2-N}\;dx\le \int_{\Omega}e^{u}\varphi^2\;dx \le \int_{\Omega}\vert\Delta\varphi\vert^2\;dx.
\end{equation}
By \eqref{3063},
\begin{equation} \label{3065}
\int_{[\vert x\vert<r]}\vert\Delta\varphi\vert^2\;dx\le 4N^2 b^2\int_{[\vert x\vert<r]}\;dx\le Cb^2r^N\le C' r^{-2}.
\end{equation}
Similarly,
\begin{equation} \label{3066}
\int_{[r<\vert x\vert<R_{0}]}\vert\Delta\varphi\vert^2\;dx\le C\int_{[r<\vert x\vert]} \vert x\vert^{-2-N}\;dx\le C' r^{-2}.
\end{equation}
Finally,
\begin{equation} \label{3067}
\int_{[R_{0}<\vert x\vert]\cap\Omega}\vert\Delta\varphi\vert^2\;dx\le C(N,R_{0})\le C' r^{-2}.
\end{equation}
Collecting \eqref{3064},  \eqref{3065}, \eqref{3066}, and \eqref{3067}, the estimate \eqref{3062} follows.

\noindent{\bf Step 2.} Take as before a standard cut-off $\zeta$ satisfying \eqref{standard} and let
$$
\psi(x) = \int_{\R^N} \vert x-y\vert^{2-N}\zeta(y)\;dy,\qquad\text{for $x\in\R^N$.}
$$
Then, there exists a constant $C$, depending on $N$ only such that
\begin{equation} \label{3068}
\psi(x)\le C\min(1,\vert x\vert^{2-N}), \qquad\text{for $x\in\R^N$.}
\end{equation}
Indeed, if $\vert x\vert>4$ and $\vert y\vert<2$,
$$
\vert x-y\vert\ge \vert x\vert - \vert y\vert \ge \frac12\vert x\vert
$$
so that
$$
\psi(x)\le 2^{N-2}\vert x\vert^{2-N}\int_{\R^N}\zeta(y)\;dy\le C\vert x\vert^{2-N},
$$
while if $\vert x\vert\le4$ and $\vert y\vert<2$,
$$
\vert x-y\vert\le \vert x\vert + \vert y\vert \le 6
$$
and so
$$
\psi(x)\le \| \zeta \|_{L^\infty(\R^N)}\int_{\vert x-y\vert<6} \vert x-y\vert^{2-N}\;dy \le C.
$$
\eqref{3068} follows.

\noindent{\bf Step 3.} Take $r\in(0,R_{0})$, $\zeta$ a standard cut-off function satisfying \eqref{standard}, and let $\varphi_{r}$ be the solution to
\begin{equation} \label{3069}
\left\{
\begin{aligned}
&-\Delta \varphi_{r}=\zeta(x/r)&&\quad\mbox{ in }\Omega,\\
&\varphi_{r}=0&&\quad\mbox{ on }\partial\Omega.
\end{aligned}
\right.
\end{equation}
Then, there exists a constant $C$ depending on $N$ only such that
\begin{equation} \label{3069a}
\varphi_{r}(x)\le C r^2 \min\left(1,r^{N-2}\vert x\vert^{2-N}\right), \quad\text{for all $x\in\Omega$.}
\end{equation}
This easily follows from the maximum principle, observing that a constant multiple of $r^2\psi(x/r)$ is a supersolution to the above equation.

\noindent{\bf Step 4.} There exists a constant $C$ depending on $N$ and $\Omega$ only, such that
\begin{equation} \label{3069p}
\int_{\Omega}e^{u}\;dx\le C.
\end{equation}
This is an obvious consequence of Equation \eqref{2163} (which holds for any $\alpha\in({\frac12},5/2]$) and H\"older's inequality.

\noindent{\bf Step 5.} Multiply \eqref{3069} by $v=-\Delta u$ and integrate by parts. Then,
$$
\int_{[\vert x\vert<r]}v\;dx \le \int_{\Omega}v\zeta(x/r)\;dx = \int_{\Omega}e^{u}\varphi_{r}\;dx.
$$
Using Step 3, we have
$$
\int_{[\vert x\vert<r]}e^{u}\varphi_{r}\;dx \le C r^2 \int_{[\vert x\vert<r]}e^{u}\;dx.
$$
Using stability with test function $\zeta(x/r)$, we also have
$$
\int_{[\vert x\vert<r]}e^{u}\;dx \le C r^{N-4}
$$
and so
$$
\int_{[\vert x\vert<r]}e^{u}\varphi_{r}\;dx \le C r^{N-2}.
$$
By Step 1 and Step 3,
$$
\int_{[r<\vert x\vert<R_{0}]}e^{u}\varphi_{r}\;dx \le C r^{N} \int_{[r<\vert x\vert<R_{0}]}e^{u}\vert x\vert^{2-N}\;dx\le C r^{N-2}.
$$
Finally, using \eqref{3069p},
$$
\int_{[R_{0}<\vert x\vert]\cap\Omega}e^{u}\varphi_{r}\;dx \le C R_{0}^{2-N}r^N \int_{\Omega}e^{u}\;dx\le C' r^{N-2}.
$$
\eqref{3061} follows.
\hfill\qed

\section{A bootstrap argument} \label{sec boot}
Our next task consists in improving the $L^1$-estimates of the previous section to $L^p$-estimates for larger values of $p$. We do this through a bootstrap argument which is reminescent of the classical Moser iteration method, up to one major difference: we will take advantage of both  the standard Sobolev inequality and the stability inequality \eqref{stability}. To be more precise, rather than \eqref{stability}, the following interpolated version of it will be used.
\begin{lemma}\label{lem interp}
Let $\Omega$ be an open set of $\R^N$, $N\ge1$. Assume that the stability inequality \eqref{stability} holds. Then, for every $s\in[0,1]$,
$$
\int_{\Omega} e^{su}\varphi^2\;dx\leq \int_{\Omega} |(-\Delta)^{s/2} \varphi|^2\;dx\quad\mbox{ for all }\varphi\in H^2(\Omega)\cap H^1_{0}(\Omega).
$$
In particular, for $s={\frac12}$,
\neweq{stabrev}
\int_{\Omega} e^{{\frac{u}{2}}}\varphi^2\;dx\leq \int_{\Omega} |\nabla \varphi|^2\;dx\quad\mbox{ for all }\varphi\in H^1_0(\Omega).
\endeq	
\end{lemma}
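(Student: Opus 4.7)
The plan is to reread the stability inequality \eqref{stability} as an operator inequality on $L^2(\Omega)$ and interpolate it via the L\"owner--Heinz theorem.

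Let $-\Delta_{D}$ denote the Dirichlet Laplacian on $\Omega$, a nonnegative self-adjoint operator on $L^2(\Omega)$ whose square has quadratic form $\langle (-\Delta_{D})^{2}\varphi,\varphi\rangle = \int_\Omega|\Delta\varphi|^2\,dx$ on the form domain $H^2(\Omega)\cap H^1_{0}(\Omega)$. The stability hypothesis \eqref{stability} then asserts exactly
$$\langle(-\Delta_{D})^{2}\varphi,\varphi\rangle_{L^2}\ge\langle M_{e^u}\varphi,\varphi\rangle_{L^2}\qquad\text{for every }\varphi\in H^2\cap H^1_{0}(\Omega),$$
where $M_{e^u}$ denotes multiplication by $e^u$. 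In the language of quadratic forms this is the operator comparison $(-\Delta_{D})^{2}\ge M_{e^u}\ge 0$.

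Next I invoke L\"owner--Heinz: if $A\ge B\ge 0$ are self-adjoint, then $A^{s}\ge B^{s}$ for every $s\in[0,1]$. Applied to $A=(-\Delta_{D})^{2}$ and $B=M_{e^u}$, together with the functional-calculus identities $((-\Delta_{D})^{2})^{s}=(-\Delta_{D})^{2s}$ and $(M_{e^u})^{s}=M_{e^{su}}$, this gives
$$(-\Delta_{D})^{2s}\ge M_{e^{su}}\qquad\text{for every }s\in[0,1].$$
Evaluating the corresponding quadratic form on $\varphi\in H^2\cap H^1_{0}(\Omega)\subset D((-\Delta_{D})^{s})$ yields
$$\int_\Omega e^{su}\varphi^{2}\,dx\le \|(-\Delta_{D})^{s}\varphi\|_{L^2}^{2},$$
which is the stated inequality. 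The special case $s=\tfrac12$ then reduces to \eqref{stabrev} since $\|(-\Delta_{D})^{1/2}\varphi\|_{L^2}^{2}=\int_\Omega|\nabla\varphi|^2\,dx$ for $\varphi\in H^1_{0}(\Omega)$.

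The main delicate point is to apply L\"owner--Heinz to genuine self-adjoint operators sharing a common dense form domain: one realises $(-\Delta_{D})^{2}$ via the Friedrichs extension, and since $u$ is smooth the multiplication operator $M_{e^u}$ is well-defined on $H^2\cap H^1_{0}(\Omega)$, where the stability hypothesis furnishes exactly the needed form comparison. To avoid any abstract appeal, L\"owner--Heinz admits a self-contained proof via Heinz's integral representation $t^{s}=c_{s}\int_{0}^{\infty}\lambda^{s-1}t(t+\lambda)^{-1}\,d\lambda$ for $s\in(0,1)$, reducing the claim to the pointwise resolvent inequality $A(A+\lambda)^{-1}\ge B(B+\lambda)^{-1}$, itself immediate from $A\ge B\ge 0$. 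Notice that the elementary alternative of combining H\"older's inequality $\int e^{su}\varphi^{2}\le(\int e^u\varphi^{2})^{s}\|\varphi\|_{L^2}^{2(1-s)}$ with \eqref{stability} only yields $\int e^{su}\varphi^{2}\le\|\Delta\varphi\|_{L^2}^{2s}\|\varphi\|_{L^2}^{2(1-s)}$, which by log-convexity of $s\mapsto \|(-\Delta_{D})^{s/2}\varphi\|_{L^2}^{2}$ is strictly weaker than the claim; operator interpolation is genuinely required.
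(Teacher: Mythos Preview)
Your argument is correct and proceeds by a different mechanism than the paper's. The paper views the spectral synthesis map (inverse Fourier transform on $\R^N$, eigenfunction expansion on bounded $\Omega$) as an operator from the weighted space $X_s=L^2(|\xi|^{4s}d\xi)$ to $Y_s=L^2(e^{su}dx)$, verifies boundedness at the endpoints $s=0$ (Plancherel) and $s=1$ (stability), and invokes Stein's complex interpolation theorem; unbounded proper domains are then handled by truncation to $\Omega\cap B_k$ and a limiting argument. You instead read \eqref{stability} as the form inequality $(-\Delta_D)^2\ge M_{e^u}\ge 0$ and apply L\"owner--Heinz operator monotonicity of $t\mapsto t^s$ directly.

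What your route buys is uniformity: once L\"owner--Heinz is accepted for nonnegative self-adjoint operators (and your resolvent sketch via $t^s=c_s\int_0^\infty \lambda^{s-1}t(t+\lambda)^{-1}d\lambda$ together with $(B+\lambda)^{-1}\ge(A+\lambda)^{-1}$ is the standard justification), no case distinction on $\Omega$ is needed, and the argument is two lines. The paper's route keeps the ingredients more elementary (Plancherel and the three-lines lemma) at the cost of treating the three geometries separately. At bottom both are the same interpolation principle---indeed L\"owner--Heinz can itself be obtained from complex interpolation---so the distinction is more one of packaging than of substance. One small caveat: you invoke smoothness of $u$ to realise $M_{e^u}$ as a genuine self-adjoint operator, which the lemma does not literally assume; but the stability hypothesis already guarantees that the form $\varphi\mapsto\int e^u\varphi^2$ is finite on $H^2\cap H^1_0$, and that is all the form version of L\"owner--Heinz requires.
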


\noindent {\bf Proof} Consider first the case where $\Omega=\R^N$. We apply complex interpolation between the family of spaces $X_s$, $Y_s$ given for $0\leq s\leq 1$ by
$$
X_s=L^2((2\pi)^{-N}|\xi|^{4s}d\xi)\,, \quad Y_s=L^2(e^{su}dx).
$$
Recall that the inverse Fourier transform ${\mathcal F}^{-1}:X_0\rightarrow Y_0$ satisfies $\|{\mathcal F}\|_{{\mathcal L}(X_0,Y_0)}=1$. Furthermore, by the stability inequality \eqref{stability}  and Plancherel's theorem, we have
$$
\int_{\R^N} e^{u} \varphi^2\;dx\leq \int_{\R^N} |\Delta \varphi |^2\;dx=(2\pi)^{-n}\int_{\R^N} |\xi|^4|  |{\mathcal F}(\phi)|^2 \;d\xi.
$$
Thus, ${\mathcal F}^{-1}:X_1\rightarrow Y_1$ satisfies $\|{\mathcal F}\|_{{\mathcal L}(X_1,Y_1)}\leq 1$. By \cite[Theorem 2]{stein}, we deduce that
$\|{\mathcal F}\|_{{\mathcal L}(X_s,Y_s)}\leq 1$ for all $0\leq s\leq 1$. 

In the case where $\Omega$ is a bounded open set, simply repeat the above proof, using the spectral decomposition of the Laplace operator in place of the Fourier transform. More precisely, let $(\lambda_{k})$ denote the eigenvalues of the Laplace operator (with domain $D=H^2(\Omega)\cap H^1_{0}(\Omega)$), let $\hat\varphi_{k}$ be the $k$-th component of $\varphi$ in the Hilbert basis of eigenfunctions associated to $(\lambda_{k})$ and interpolate between the family of weighted $L^2$-spaces $X_{s}$, $Y_{s}$ corresponding to the norms
$$
\| \varphi \|_{X_{s}}^2 = \sum_{k} \lambda_{k}^{2s}\vert\hat\varphi_{k}\vert^2, \quad \| \varphi \|_{Y_{s}}^2 = \int_{\Omega}e^{su} \varphi^2\;dx.
$$
In the case where $\Omega$ is an unbounded proper open set,  take $k>0$, let $\Omega_{k}=\Omega\cap B_{k}$ and
$$
-\mu_{k}=\inf\left\{ \int_{\Omega_{k}}\left(\vert\Delta\varphi\vert^2-e^{u}\varphi^2\right)\;dx \; : \;
\varphi\in H^2(\Omega_{k})\cap H^1_{0}(\Omega_{k}), \| \varphi \|_{L^2(\Omega_{k})}=1
\right\}.
$$
Then, the previous analysis leads to
$$
\int_{\Omega_{k}} \left[\left(e^{u}-\mu_{k}\right)_{+}\right]^s\varphi^2\;dx\leq \int_{\Omega_{k}} |(-\Delta)^{s/2} \varphi|^2\;dx\quad\mbox{ for all }\varphi\in H^2(\Omega_{k})\cap H^1_{0}(\Omega_{k}).
$$
By \eqref{stability}, $\lim_{k\to+\infty}-\mu_{k}\ge0$ and the result follows.
\hfill\qed

Our next lemma is simply the first step in the Moser iteration method: we multiply the equation by a power of its right-hand side, localize, and integrate.
\begin{lemma}\label{fe} Let $\Omega$ be an open set of $\R^N$, $N\ge1$. Assume $(u,v)\in C^2(\Omega)^2$ solves
$$\left\{
\begin{aligned}
-&\Delta u=v&&\quad\mbox{ in }\Omega,\\
-&\Delta v=e^u&&\quad\mbox{ in }\Omega.
\end{aligned}
\right.
$$
Take $\a> \frac12$, $\var\in C^1_c(\Omega)$. Then, there exists a constant $C$ depending on $\alpha$ only, such that
\begin{equation}\label{1}
\frac{\sqrt{2\a-1}}{\a}||\nabla(v^\a\var)||_{L^2(\Omega)}\leq \|e^{\frac u2}v^{\alpha-\frac12}\var\|_{L^2(\Omega)}+C||v^\a\nabla\var||_{L^2(\Omega)}.
\end{equation}
and
\begin{equation}\label{2}
\frac{2}{\sqrt{\a}}||\nabla(e^{\frac{\a}{2}u}\var)||_{L^2(\Omega)} \leq \|e^{\frac\alpha2 u}v^{\frac12}\var\|_{L^2(\Omega)} + C ||e^{\frac{\a}{2}u}\nabla\var||_{L^2(\Omega)}.
\end{equation}
\end{lemma}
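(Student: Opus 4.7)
The approach for both inequalities is identical: multiply the appropriate equation by a suitable test function, integrate by parts, and then re-express the resulting ``bare'' gradient squared of $v$ (resp.\ $u$) in terms of $|\nabla(v^\alpha\var)|^2$ (resp.\ $|\nabla(e^{\alpha u/2}\var)|^2$) via the product-rule identities
$$\alpha v^{\alpha-1}\var\nabla v = \nabla(v^\alpha\var) - v^\alpha\nabla\var, \qquad \frac{\alpha}{2}e^{\alpha u/2}\var\nabla u = \nabla(e^{\alpha u/2}\var) - e^{\alpha u/2}\nabla\var.$$
This reduces both claims to a scalar quadratic inequality that is solved by completing the square.

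For \eqref{1} I would multiply $-\Delta v=e^u$ by $v^{2\alpha-1}\var^2$ (which is legitimate because $v>0$, by the superharmonicity argument used in the proof of Lemma \ref{l1v}) and integrate by parts to obtain
$$(2\alpha-1)\int v^{2\alpha-2}|\nabla v|^2\var^2\,dx + 2\int v^{2\alpha-1}\var\nabla v\cdot\nabla\var\,dx = \int e^u v^{2\alpha-1}\var^2\,dx.$$
Setting $\phi=v^\alpha\var$ and using the first identity above to express $v^{2\alpha-2}|\nabla v|^2\var^2$ and $v^{2\alpha-1}\var\nabla v$ in terms of $|\nabla\phi|^2$, $v^\alpha\nabla\phi\cdot\nabla\var$ and $v^{2\alpha}|\nabla\var|^2$, a short rearrangement leads to the identity
$$\frac{2\alpha-1}{\alpha^2}\|\nabla\phi\|_{L^2}^2 = \|e^{u/2}v^{\alpha-1/2}\var\|_{L^2}^2 + \frac{2(\alpha-1)}{\alpha^2}\int v^\alpha\nabla\phi\cdot\nabla\var\,dx + \frac{1}{\alpha^2}\|v^\alpha\nabla\var\|_{L^2}^2.$$
Bounding the cross integral by Cauchy-Schwarz produces a quadratic inequality $aA^2-bBA\le cB^2+C^2$, with $A=\|\nabla\phi\|_{L^2}$, $B=\|v^\alpha\nabla\var\|_{L^2}$, $C=\|e^{u/2}v^{\alpha-1/2}\var\|_{L^2}$, and $a=(2\alpha-1)/\alpha^2>0$ (this is where $\alpha>1/2$ is used); completing the square and applying $\sqrt{x^2+y^2}\le x+y$ yields $A\le C(\alpha)B+C$, which is \eqref{1}.

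For \eqref{2} I would do the analogous computation starting from $-\Delta u=v$, multiplying by $e^{\alpha u}\var^2$ to get
$$\alpha\int e^{\alpha u}|\nabla u|^2\var^2\,dx + 2\int e^{\alpha u}\var\nabla u\cdot\nabla\var\,dx = \int v e^{\alpha u}\var^2\,dx,$$
and then substituting the second product-rule identity with $\psi=e^{\alpha u/2}\var$. Here a pleasant cancellation of the $|\nabla\var|^2$ remainders occurs (specific to the exponential nonlinearity), leaving the clean equality
$$\|\nabla\psi\|_{L^2}^2 - \int e^{\alpha u/2}\nabla\psi\cdot\nabla\var\,dx = \frac{\alpha}{4}\|e^{\alpha u/2}v^{1/2}\var\|_{L^2}^2.$$
Cauchy-Schwarz on the cross term and the same quadratic trick give $\|\nabla\psi\|_{L^2}\le \|e^{\alpha u/2}\nabla\var\|_{L^2} + (\sqrt{\alpha}/2)\|e^{\alpha u/2}v^{1/2}\var\|_{L^2}$, which is \eqref{2} after multiplication by $2/\sqrt{\alpha}$.

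There is no deep obstacle; the proof is purely algebraic bookkeeping and a single Cauchy-Schwarz on each cross term. The only point requiring care is the sign of the coefficient $(2\alpha-1)/\alpha^2$ in the first derivation, which both motivates the hypothesis $\alpha>1/2$ and is the reason why the quadratic solves cleanly with a constant depending only on $\alpha$.
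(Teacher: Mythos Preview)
Your proof is correct and follows essentially the same route as the paper: multiply the relevant equation by $v^{2\alpha-1}\var^2$ (resp.\ $e^{\alpha u}\var^2$), integrate by parts, rewrite via the product-rule identity $\var\nabla v^\alpha=\nabla(v^\alpha\var)-v^\alpha\nabla\var$, apply Cauchy--Schwarz to the cross term, and solve the resulting quadratic in $\|\nabla(v^\alpha\var)\|_{L^2}$. The paper only writes out the computation for \eqref{1} and says \eqref{2} is similar; your explicit treatment of \eqref{2}, including the observed cancellation of the $|\nabla\var|^2$ remainders, is exactly the analogous calculation.
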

\begin{proof}
Since the computations are very similar, we prove only \eqref{1}.  We multiply $-\Delta v=e^u$ by $v^{2\a-1}\var^2$ and we integrate. We obtain
\begin{equation*}
\begin{split}
\int_{\Omega} e^uv^{2\a-1}\var^2\;dx
&=\int_{\Omega}\nabla v\cdot\nabla(v^{2\a-1}\var^2)\;dx\\
&=\frac{2\a-1}{\a^2}\int_{\Omega}|\nabla v^\a|^2\var^2\;dx+2\int_{\Omega} v^{2\a-1}\varphi\nabla v\cdot \nabla \varphi\; dx\\
&= \frac{2\a-1}{\a^2}\left(\int_{\Omega} |\nabla(v^\a\var)|^2\;dx-\int_{\Omega} v^{2\a}|\nabla\var|^2\;dx\right)\\ &\quad-\frac{2(\a-1)}{\a^2}\int_{\Omega} v^{\a}\var\nabla\var\cdot \nabla v^\a \;dx.
\end{split}
\end{equation*}
In the last term of the right hand side, replace  $\var\nabla v^\a$ by $\nabla(v^\a\var)-v^\a\nabla\var$. Then,
\begin{equation*}
\begin{split}
\int_{\Omega} e^uv^{2\a-1}\var^2\;dx&=\frac{2\a-1}{\a^2}\int_{\Omega} |\nabla(v^\a\var)|^2\;dx-\frac{1}{\a^2}\int_{\Omega} v^{2\a}|\nabla\var|^2\;dx\\
&\quad-\frac{2(\a-1)}{\a^2}\int_{\Omega} v^{\a}\nabla\var\nabla (v^\a\var)\;dx,
\end{split}
\end{equation*}
which we rewrite as
\begin{equation}\label{a}
\begin{split}
({2\alpha-1}) \int_{\Omega} \vert\nabla(v^\a\var)\vert^2\;dx& = {2(\a-1)}\int_{\Omega} v^{\a}\nabla\var\nabla (v^\a\var)\;dx +\\&\quad \int_{\Omega} v^{2\a}|\nabla\var|^2\;dx+\a^2\int_{\Omega} e^uv^{2\a-1}\var^2\;dx.
\end{split}
\end{equation}
By the Cauchy-Schwarz inequality,
$$\left|\int_{\Omega} v^{\a}\nabla\var\nabla (v^\a\var)\;dx\right|\leq \left(\int_{\R^n} |\nabla(v^\a\var)|^2\;dx\right)^\frac12\left(\int_{\Omega} v^{2\a}|\nabla\var|^2\;dx\right)^\frac12,$$
Plugging this in \eqref{a}, we obtain a quadratic inequality of the form
$$({2\a-1})X^2\leq 2|\a-1|AX+A^2+B^2$$
where
$$X=||\nabla(v^\a\var)||_2, \quad \ A=||v^\a\nabla\var||_2
\quad\text{and}\quad
B=\alpha\|  e^{\frac12 u}v^{\a-\frac12}\var\|_{2}.$$
Solving the quadratic inequality, we deduce that
$$X\leq \frac{\vert\alpha-1\vert A+\sqrt{\vert\a-1\vert^2A^2+(2\a-1)(A^2+B^2)}}{2\a-1}\le \frac{B}{\sqrt{2\a-1}}+ C_{\a}A.$$
\eqref{1} follows by replacing $A,B$ and $X$ with their values.
\end{proof}
We have just used the equation. Now, we use the stability assumption.
\begin{lemma}\label{se}
Make the same assumptions as in Lemma \ref{fe}. Assume in addition that \eqref{stabrev} holds for every $\varphi\in C^1_{c}(\Omega)$. Let $\alpha^\sharp, \alpha^*$ denote the largest two roots of the polynomial $X^3-8X+4$.Then, for every $\alpha\in(\alpha^\sharp,\alpha^*)$, there exists a constant $C$ depending on $\alpha$ only such that
\begin{equation}\label{3}
\int_{\Omega}|\nabla(v^\a\var)|^2\;dx \le C \int_{\Omega}v^{2\a}|\nabla\var|^2\;dx,
\end{equation}
or
\begin{equation}\label{4}
\int_{\Omega}|\nabla(e^{\frac{\a}{2}u}\var)|^2\;dx\le C \int_{\Omega}e^{\a u}|\nabla\var|^2\;dx,
\end{equation}
for any $\var\in C^1_c(\Omega)$.
\end{lemma}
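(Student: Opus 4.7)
The plan is to combine the equation-based inequalities \eqref{1} and \eqref{2} of Lemma \ref{fe} with the stability inequality \eqref{stabrev} via H\"older's inequality, and then conclude by a dichotomy. First I would set
$$
X:=\|\nabla(v^\a\var)\|_{L^2(\Omega)},\qquad Y:=\|\nabla(e^{\a u/2}\var)\|_{L^2(\Omega)},
$$
$$
A:=\|v^\a\nabla\var\|_{L^2(\Omega)},\qquad A':=\|e^{\a u/2}\nabla\var\|_{L^2(\Omega)},
$$
and apply \eqref{stabrev} with the test functions $\psi=v^\a\var$ and $\psi=e^{\a u/2}\var$, obtaining
$$
\int_\Omega e^{u/2}v^{2\a}\var^2\,dx\leq X^2,\qquad \int_\Omega e^{(\a+1/2)u}\var^2\,dx\leq Y^2.
$$

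Next, setting $\beta:=1/(2\a)$, I would exploit the pointwise factorizations
$$
e^u v^{2\a-1}\var^2=\bigl(e^{u/2}v^{2\a}\var^2\bigr)^{1-\beta}\bigl(e^{(\a+1/2)u}\var^2\bigr)^{\beta},
$$
$$
e^{\a u}v\var^2=\bigl(e^{u/2}v^{2\a}\var^2\bigr)^{\beta}\bigl(e^{(\a+1/2)u}\var^2\bigr)^{1-\beta}
$$
(easily verified by matching the exponents of $e^u$, $v$, and $\var$), together with H\"older's inequality applied with the conjugate exponents $1/(1-\beta)$ and $1/\beta$, to bound the mixed terms on the right-hand sides of \eqref{1} and \eqref{2}:
$$
\int_\Omega e^u v^{2\a-1}\var^2\,dx\leq X^{2(1-\beta)}Y^{2\beta},\qquad \int_\Omega e^{\a u}v\var^2\,dx\leq X^{2\beta}Y^{2(1-\beta)}.
$$
Plugging these into \eqref{1} and \eqref{2} and writing $a:=\sqrt{2\a-1}/\a$, $b:=2/\sqrt\a$, I reach the coupled system
$$
aX\leq X^{1-\beta}Y^\beta+CA,\qquad bY\leq X^\beta Y^{1-\beta}+CA',
$$
valid with some constant $C=C(\a)$.

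The key observation is that $\a\in(\a^\sharp,\a^*)$ is equivalent to $\a^3-8\a+4<0$, i.e., $4(2\a-1)>\a^3$, i.e., $ab>1$, so that the interval $(1/b,a)$ is non-empty and I may fix any $\eta\in(1/b,a)$. Setting $t:=(Y/X)^\beta$, the two inequalities above rewrite as $(a-t)X\leq CA$ and $(b-1/t)Y\leq CA'$. A simple dichotomy then finishes the proof: if $t\leq \eta$, then $a-t\geq a-\eta>0$, yielding $X\leq [C/(a-\eta)]A$, i.e., \eqref{3}; if instead $t>\eta$, then $1/t<1/\eta<b$, so $b-1/t\geq b-1/\eta>0$ and $Y\leq[C/(b-1/\eta)]A'$, which is \eqref{4}.

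The main obstacle I anticipate is the bookkeeping of the H\"older exponents in the second step: only the precise pairing $X^{1-\beta}Y^\beta$ versus $X^\beta Y^{1-\beta}$ makes the product of the two inequalities of the coupled system above behave like $ab\cdot XY\lesssim XY+(\text{terms involving }A,A')$, which is what makes the threshold $ab=1$ sharp and brings in the critical polynomial $X^3-8X+4$. Once the system is set up correctly, the dichotomy argument is immediate.
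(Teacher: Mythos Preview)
Your proof is correct and follows essentially the same route as the paper: the same H\"older splitting with exponent $\beta=1/(2\alpha)$, the same use of \eqref{stabrev} with test functions $v^\alpha\varphi$ and $e^{\alpha u/2}\varphi$, and the same coupled system $aX\le X^{1-\beta}Y^\beta+CA$, $bY\le X^\beta Y^{1-\beta}+CA'$ with the same threshold $ab>1\Leftrightarrow \alpha^3-8\alpha+4<0$. The only difference is in the final algebraic step: the paper multiplies the two inequalities, substitutes $X'=K^{1/(2\alpha)}H^{1-1/(2\alpha)}$, $Y'=H^{1/(2\alpha)}K^{1-1/(2\alpha)}$ to obtain $\delta X'Y'\le aY'+bX'+ab$, and then argues that one of $X',Y'$ must be bounded; you instead introduce the ratio $t=(Y/X)^\beta$ and split according to whether $t\le\eta$ or $t>\eta$ for a fixed $\eta\in(1/b,a)$. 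Your dichotomy is slightly more direct (it avoids multiplying the inequalities and changing variables, and yields explicit constants), but the underlying mechanism is identical. One small point worth stating explicitly: the ratio $t$ requires $X>0$, but if $X=0$ (resp.\ $Y=0$) then \eqref{3} (resp.\ \eqref{4}) holds trivially, so this is harmless.
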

\begin{proof}
By H\"older's inequality,
$$\int_{\Omega} e^uv^{2\a-1}\var^2\;dx\leq \left(\int_{\Omega}e^{\frac{2\alpha+1}{2}u}\var^2\;dx\right)^{\frac1{2\a}}\left(\int_{\Omega}e^{\frac u2}v^{2\a}\var^2\;dx\right)^{\frac{2\a-1}{2\a}}.$$
Using the stability inequality \eqref{stabrev}, we deduce that
$$\int_{\Omega} e^uv^{2\a-1}\var^2\;dx \leq H^{\frac{1}{\a}}K^{2-\frac{1}{\a}},$$
where we set
$$H=||\nabla(e^{\frac{\a}{2}u}\var)||_2 \ \mbox{ and }\  K=||\nabla(v^\a\var)||_2.$$
Similarly,
$$\int_{\Omega} e^{\a u}v\var^2\;dx \leq K^{\frac{1}{\a}}H^{2-\frac{1}{\a}}.$$
Combining with \eqref{1}-\eqref{2}, this gives
\begin{equation}\label{5}
\frac{\sqrt{2\a-1}}{\a}H \leq K^{\frac{1}{2\a}}H^{1-\frac{1}{2\a}}+ C ||e^{\frac{\a}{2}u}\nabla\var||_2,
\end{equation}
\begin{equation}\label{6}
\frac{2}{\sqrt\a}K \leq H^{\frac{1}{2\a}}K^{1-\frac{1}{2\a}}+ C ||v^\a\nabla\var||_2.
\end{equation}
Multiply \eqref{5} by \eqref{6}. Then,
\begin{equation}\label{7}
\left(\frac{2\sqrt{2\a-1}}{\a\sqrt\a}-1\right)HK\leq a H^{\frac{1}{2\a}}K^{1-\frac{1}{2\a}} 
+b K^{\frac{1}{2\a}}H^{1-\frac{1}{2\a}} +ab,
\end{equation}
where
$$a=C ||e^{\frac{\a}{2}u}\nabla\var||_2 \mbox{ and } b=C ||v^\a\nabla\var||_2.$$
Note that for $\alpha\in(\alpha^\sharp, \alpha^*)$,
$$\delta:=\frac{2\sqrt{2\a-1}}{\a\sqrt\a}-1 >0.$$
Introduce
$$
X=K^{\frac{1}{2\a}}H^{1-\frac{1}{2\a}} \ \mbox{ and }\  Y=H^{\frac{1}{2\a}}K^{1-\frac{1}{2\a}}.
$$
Then, \eqref{7} can be rewritten as
\begin{equation*}
\delta XY\leq aY+bX+ab,
\end{equation*}
and so, either $X$ is bounded by a multiple of $a$ or  $Y$ by a multiple of $b$.
In the former case, recalling \eqref{5}, we obtain \eqref{4}. In the latter case, \eqref{6} implies \eqref{3}.
\end{proof}
In the two previous lemmata, we have used successively the equation and the stability assumption. Now, we apply the Sobolev inequality to set up a bootstrap procedure.
\begin{lemma}\label{boot}
Make the same assumptions as in Lemma \ref{se}. Take $\alpha\in(\alpha^\sharp,\alpha^*)$ and for $R>0$, let $B_{R}$ denote a ball of radius $R$ (resp. an annulus of radii $R$ and $R/2$) contained in $\Omega$. Assume that there exists a constant $C$ depending on $N$ and $\a$ only, such that for all $R>0$ (resp. for all $R$ large enough)
\begin{equation}\label{Ha}
\int_{B_R} (e^{\a u}+v^{2\a})\;dx \le C R^{N-4\a}. \tag{$H_\a$}
\end{equation}
Then, $\left(H_{\frac{N}{N-2}\a}\right)$ also holds.
\end{lemma}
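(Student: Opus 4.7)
The plan is to combine the stability-based inequalities \eqref{3}--\eqref{4} of Lemma~\ref{se} with the Sobolev inequality $\|f\|_{L^{2^\ast}} \lesssim \|\nabla f\|_{L^2}$, where $2^\ast = 2N/(N-2)$, via a standard cutoff. Let $\zeta \in C^1_c(B_2)$ (respectively an annular cutoff, in the annular case) satisfy $0 \le \zeta \le 1$ and $\zeta \equiv 1$ on $B_1$, and set $\varphi(x) = \zeta(x/R)$. Then $\varphi \equiv 1$ on $B_R$, $\supp\varphi \subset B_{2R}$, and $|\nabla\varphi| \lesssim R^{-1}$. Using the notation $H, K, a, b$ from the proof of Lemma~\ref{se}, the hypothesis $(H_\alpha)$ at once yields
$$a^2 + b^2 \lesssim R^{-2}\int_{B_{2R}}\bigl(e^{\alpha u} + v^{2\alpha}\bigr)\,dx \lesssim R^{N-4\alpha-2}.$$

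By Lemma~\ref{se}, at least one of \eqref{3} or \eqref{4} holds for this $\varphi$. Suppose first \eqref{4} holds, so that $H \lesssim a \lesssim R^{(N-4\alpha-2)/2}$. To recover a comparable bound on $K$, I would return to \eqref{6} and apply Young's inequality
$$H^{1/(2\alpha)} K^{1-1/(2\alpha)} \le \varepsilon K + C_\varepsilon H,$$
with conjugate exponents $2\alpha$ and $2\alpha/(2\alpha-1)$, to absorb the $K$ factor; taking $\varepsilon$ small enough gives $K \lesssim H + b \lesssim R^{(N-4\alpha-2)/2}$. The case \eqref{3} is handled symmetrically using \eqref{5}, yielding $H \lesssim K + a$ with the same scaling.

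With both $H$ and $K$ controlled by $R^{(N-4\alpha-2)/2}$, the Sobolev inequality applied to the compactly supported functions $e^{\alpha u/2}\varphi$ and $v^\alpha\varphi$ produces
$$\|e^{\alpha u/2}\varphi\|_{L^{2^\ast}}^2 + \|v^\alpha\varphi\|_{L^{2^\ast}}^2 \lesssim H^2 + K^2 \lesssim R^{N-4\alpha-2}.$$
Raising to the power $2^\ast/2 = N/(N-2)$, using $\varphi \equiv 1$ on $B_R$, and the algebraic identity $(N-4\alpha-2)N/(N-2) = N - 4\alpha N/(N-2)$, I conclude
$$\int_{B_R}\Bigl(e^{\frac{N\alpha}{N-2}u} + v^{\frac{2N\alpha}{N-2}}\Bigr)\,dx \lesssim R^{N - \frac{4N\alpha}{N-2}},$$
which is exactly $\bigl(H_{\alpha N/(N-2)}\bigr)$.

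The main obstacle is the asymmetry of Lemma~\ref{se}'s dichotomy: only one of \eqref{3} or \eqref{4} is guaranteed for a given $\varphi$, yet $(H_{\alpha N/(N-2)})$ requires controlling both $e^{\alpha N u/(N-2)}$ and $v^{2\alpha N/(N-2)}$. The Young-inequality absorption step coupling \eqref{5} and \eqref{6} is precisely what transfers the bound from the quantity known to be small to the other, at the cost of a harmless constant depending only on $\alpha$.
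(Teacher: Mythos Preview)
Your proof is correct and follows essentially the same route as the paper: use the dichotomy of Lemma~\ref{se} with a standard cutoff, transfer the bound from the controlled quantity ($H$ or $K$) to the other via \eqref{5} or \eqref{6}, then apply the Sobolev inequality and the scaling identity $(N-4\alpha-2)\frac{N}{N-2}=N-\frac{4N\alpha}{N-2}$. The paper is simply terser about the transfer step, writing ``going back to \eqref{5}, we deduce similarly'' where you spell out the Young-inequality absorption; both arguments are the same in substance.
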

\noindent Bootstrapping the above lemma, we find
\begin{corollary}\label{mondaycor}
{Make the same assumptions as in Lemma \ref{se}.}
Assume that \eqref{Ha} holds for some $\alpha\in(\alpha^\sharp,\alpha^*)$. Then,
\begin{equation} \label{capacitary estimate}
\begin{aligned}
\int_{B_R} e^{p u}\;dx &\le C R^{N-4p},\qquad\text{for all
$p<p^*:=\alpha^*+\frac12$,}\\\
\int_{B_R} v^{q}\;dx &\le C R^{N-2q},\qquad\text{for all
$q<q^*:=\frac {2N}{N-2}\alpha^*$.}\\
\end{aligned}
\end{equation}
\end{corollary}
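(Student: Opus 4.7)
The plan is to iterate Lemma~\ref{boot} finitely many times and then conclude with H\"older's inequality. Starting from the assumed bound $(H_{\alpha_0})$ for some $\alpha_0 \in (\alpha^\sharp,\alpha^*)$, I define $\alpha_{k+1} := \frac{N}{N-2}\alpha_k$. Whenever $\alpha_k$ still lies in $(\alpha^\sharp,\alpha^*)$, Lemma~\ref{boot} applies and yields $(H_{\alpha_{k+1}})$. Because the Sobolev factor $\frac{N}{N-2}$ is strictly greater than $1$, the sequence is strictly increasing and must exit $(\alpha^\sharp,\alpha^*)$ after finitely many steps. Let $K$ be the first index with $\alpha_K \ge \alpha^*$; then $(H_{\alpha_K})$ holds. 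By choosing $\alpha_0$ carefully (which is possible because $\frac{N}{N-2}\alpha^\sharp < \alpha^*$ in the relevant dimensions, so iteration can start close to $\alpha^\sharp$ and traverse several intermediate values), one can arrange $\alpha_{K-1}$ to be as close to $\alpha^*$ from below as desired, hence $\alpha_K$ arbitrarily close to $\frac{N}{N-2}\alpha^*$.

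Once $(H_{\alpha_K})$ is in hand, the two claimed capacitary estimates follow by H\"older's inequality. For any $p \in (0,\alpha_K]$,
\[
\int_{B_R} e^{pu}\,dx \le \Bigl(\int_{B_R} e^{\alpha_K u}\,dx\Bigr)^{p/\alpha_K}|B_R|^{1-p/\alpha_K} \le CR^{N-4p},
\]
and analogously, using the bound on $\int_{B_R} v^{2\alpha_K}\,dx$, for any $q \in (0,2\alpha_K]$,
\[
\int_{B_R} v^q\,dx \le \Bigl(\int_{B_R} v^{2\alpha_K}\,dx\Bigr)^{q/(2\alpha_K)}|B_R|^{1-q/(2\alpha_K)} \le CR^{N-2q}.
\]
Letting $\alpha_K$ approach $\frac{N}{N-2}\alpha^*$ by varying $\alpha_0$ and $K$, these estimates hold for all $q < q^* = \frac{2N}{N-2}\alpha^*$ and for all $p$ below $\frac{N}{N-2}\alpha^*$. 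Since in the dimension range at hand one has $p^* = \alpha^* + \frac12 \le \frac{N}{N-2}\alpha^*$ (equivalent to the algebraic inequality $\alpha^* \ge (N-2)/4$, satisfied because $\alpha^* > 5/2$), this covers the desired range $p < p^*$.

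The main difficulty is reconciling the multiplicative growth $\alpha_{k+1} = \frac{N}{N-2}\alpha_k$ with the rigid stability constraint $\alpha_k \in (\alpha^\sharp,\alpha^*)$ required by Lemma~\ref{se}, which forces the iteration to stop once $\alpha_k$ leaves the admissible interval. The natural ceiling of the iteration is therefore $\frac{N}{N-2}\alpha^*$, so the match between this ceiling and the target exponent $p^*$ is tight and depends on the explicit roots of $X^3 - 8X + 4$. The bookkeeping—making sure that for every prescribed $p < p^*$ or $q < q^*$ there is a suitable starting $\alpha_0$ and iteration count $K$ producing $\alpha_K$ above the target—is the delicate combinatorial core of the argument, but is ultimately just a chase through the Sobolev ratio and the explicit values of $\alpha^\sharp$ and $\alpha^*$.
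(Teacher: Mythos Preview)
Your bootstrap argument for the $v$--estimate is essentially the paper's: iterate Lemma~\ref{boot}, using H\"older to move down in $\alpha$ when needed, until $(H_\alpha)$ holds for every $\alpha<\frac{N}{N-2}\alpha^*$; this gives $\int_{B_R}v^q\le CR^{N-2q}$ for all $q<q^*$. (You should say explicitly that H\"older lets you pass from $(H_\alpha)$ to $(H_\beta)$ for any $\beta\le\alpha$; this is what makes ``choosing $\alpha_0$'' legitimate, since the hypothesis hands you a fixed $\alpha$, not a free one.)

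The gap is in the $e^u$--estimate. Your argument reaches exponents $p<\frac{N}{N-2}\alpha^*$, and you then claim this covers $p<p^*=\alpha^*+\frac12$ because $\alpha^*\ge (N-2)/4$. But $\alpha^*\approx 2.534$, so this inequality \emph{fails} for $N\ge 13$: e.g.\ for $N=13$ one has $\frac{N}{N-2}\alpha^*\approx 2.995<3.034\approx p^*$. Since the corollary is used precisely in the partial regularity result for $N\ge13$ (Section~8), this is a genuine gap, not a cosmetic one.

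The paper closes this gap by a different final step that does not rely on the Sobolev ratio: once $(H_\alpha)$ is known for all $\alpha<\alpha^*$, fix any such $\alpha$ and apply the interpolated stability inequality \eqref{stabrev} with test function $e^{\frac{\alpha}{2}u}\psi(\cdot/R)$. This yields
\[
\int_{B_R} e^{(\alpha+\frac12)u}\,dx \;\le\; \int \bigl|\nabla\bigl(e^{\frac{\alpha}{2}u}\psi(\cdot/R)\bigr)\bigr|^2\,dx,
\]
and the right-hand side is controlled (via Lemma~\ref{fe}/\ref{se} and $(H_\alpha)$) by $CR^{N-4(\alpha+\frac12)}$. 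Letting $\alpha\uparrow\alpha^*$ gives all $p<p^*$, uniformly in $N$. The point is that the extra $+\frac12$ comes from one more use of stability, not from one more Sobolev step; your argument replaces this by a Sobolev gain of factor $\frac{N}{N-2}$, which is too weak when $N$ is large.
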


\begin{remark}The inequality \eqref{Ha} can be further simplified if the boundary values/limiting behavior at infinity of the solution is known. See in particular Proposition \ref{ogps}.
\end{remark}

\proofp{of the lemma \ref{boot}}
Assume \eqref{Ha} is valid. By Lemma \ref{se}, either \eq{3} or \eq{4} holds.\\
Assume that \eqref{3} is valid (the other case is similar). Using the Sobolev embedding, we obtain
\begin{equation*}
\left(\int_{\R^N} v^{2^*\a}\var^{2^*}dx\right)^{\frac{2}{2^*}}\le \int_{\R^N} |\nabla(v^{\a}\var)|^2dx \lesssim \int_{\R^N} v^{2\a}|\nabla \var|^2dx.
\end{equation*}
Take a standard cut-off function $\psi\in C^1_{c}(B_{2})$ such that $0\le\psi\le1$, $\psi=1$ in $B_{1}$, and $\psi=0$ outside $B_{2}$. Apply the above inequality with $\var(x)=\psi(x/R)$ and use \eqref{Ha}.
Then,
\begin{equation}\label{Ha1}
\int_{B_R} v^{2^*\a}dx\lesssim R^{N-2.2^*\a}.
\end{equation}
Going back to \eqref{5}, we deduce similarly that
\begin{equation}\label{Ha2}
\int_{B_R} e^{\frac{2^*\a}{2}u}dx\lesssim R^{N-2^*.2\a}.
\end{equation}
%
\hfill\qed

\proofp{of the corollary \ref{mondaycor}} By H\"older's inequality, if \eqref{Ha} holds for some $\alpha$, then $(H_{\beta})$ holds for all $\beta\le\alpha$. So, bootstrapping \eqref{Ha}, we easily deduce that it holds for all $\alpha<\frac{N}{N-2}\alpha^*$. Fix at last $\alpha<\alpha^*$ and apply now stability \eqref{stabrev}, with test function $e^{\frac \a2 u}\psi(x/R)$, to deduce that \eqref{capacitary estimate}  holds for all $p=\alpha+\frac12<p^*=\alpha^*+\frac12$.
\hfill\qed

\section{The Liouville theorem}
We prove here Theorem \ref{liouville}.  Assume by contradiction that there exists a solution $u$ of \eqref{gelfandbe} which is stable outside a compact set {$K\subset B_{R_0}$} and such that $\super v(\infty)=0$.

\noindent{\bf Step 1. } $e^{u}\in L^p(\R^N)$, for every $p<p^*$.

By Lemma \ref{l1v}, we have
\begin{equation} \label{lt1}
\int_{{ A_{R}}} v \; dx\le C R^{N-2}\quad\text{for every $R>R_{0}$,}
\end{equation}
where { $A_{R}$} is the annulus of radii $R$ and $2R$. In addition, stability \eqref{stability} implies that
\begin{equation} \label{lt2}
\int_{ {A_{R}}} e^{u} \; dx\le C R^{N-4}\quad\text{for every $R>R_{0}$,}
\end{equation}
Recall now the following standard elliptic estimate : for $p\in [1,\frac{N}{N-2})$,
\begin{equation*}
||v||_{L^p({ B_2\backslash B_1})}\lesssim ||\Delta v||_{L^1({ B_2\backslash B_1})}+||v||_{L^1({ B_4\backslash B_{1/2}})},
\end{equation*}
and its rescaled version
\begin{equation*}
R^{-\frac{N}{p}}||v||_{L^p({ A_R})}\lesssim R^{-N}\left(R^2||\Delta v||_{L^1({ A_R})}+||v||_{L^1({ B_{4R}\backslash B_{R/2}})}\right).
\end{equation*}
Applying this estimate respectively to $u$ and $v$, we deduce from \eqref{lt1}, \eqref{lt2} that
\eqref{Ha} holds for any $\alpha\in[1,\frac{N}{N-2})$ and all large $R$.  Hence, \eqref{capacitary estimate} holds. By a straightforward covering argument, Step 1 follows.

The rest of the proof is very similar to the one given in \cite{df};

\noindent{\bf Step 2. }$$\lim_{\vert x\vert\to+\infty}\vert x\vert^4e^u=0.$$
{By Step 1., given any $\delta>0$, we can choose $\tilde R$ large enough such that
\begin{equation}\label{Br}
\int_{|y|\geq \tilde R} e^{pu}dx \leq \delta^{p}.
\end{equation}
Let $x\in \R^N\backslash \overline{B_{R_0}}$, $|x|\geq 4\tilde R$. Set $R=\frac{2}{3}|x|$. This yields
$$B(x,R/4)\subset A_R= B_{2R}\backslash B_R\subset \{y\in \R^N\ |\ |y|\geq \tilde R\}.$$
Thus, we have
\begin{equation} \label{ce}
\begin{aligned}
\int_{B(x,R/4)} e^{p u}\;dx &\le C R^{N-4p},\qquad\text{for all
$p<p^*:=\alpha^*+\frac12$,}\\\
 \int_{B(x,R/4)} v^{q}\;dx &\le C R^{N-2q},\qquad\text{for all
$q<q^*:=\frac {2N}{N-2}\alpha^*$.}\\
\end{aligned}
\end{equation}
Next fix $\delta>0$ and consider $w=e^{u}$. By Kato's inequality, $w$ satisfies
\begin{equation}\label{st}
-\Delta w - v w\le 0\quad\text{in $\R^N$.}
\end{equation}
Take $\varepsilon$ small enough such that $\frac{N}{2-\varepsilon}<\frac{2N}{N-2}\a^*$. Here, we have used the assumption $5\leq N\leq 12$. The Serrin-Trudinger inequality \cites{serrin,trudinger} for subsolutions to \eqref{st} ensures that for any $p<p^*$
\begin{equation*}
||w||_{L^\infty(B(x,\frac{R}{8}))}\leq C R^{-\frac Np}||w||_{L^{p}(B(x,\frac{R}{4}))}
\end{equation*}
where $C$ depends to $N$, $p$ and
$$R^{\varepsilon}||v||_{L^{\frac{N}{2-\varepsilon}}(B(x,\frac{R}{4}))}.$$
In particular, for $p=\frac N4$ and using \eqref{ce}
\begin{equation}\label{eux}
e^{u(x)}\leq C R^{-4}||e^u||_{L^{N/4}(B(x,\frac{R}{4}))}
\end{equation}
where $C$ depends only to $N$. Combining \eqref{Br} and \eqref{eux} gives
\begin{equation*}
e^{u(x)}\leq C\delta R^{-4}\lesssim \delta |x|^{-4},
\end{equation*}
which proves Step 2.

\noindent{\bf Step 3. } By Step 2., there exists $R_1>R_0$ such that
$$-\Delta \ov \leq \frac{1}{2r^4} \quad \mbox{for all }r> R_1.$$
Hence,
$$\ov'(r)\geq \frac{C(N)}{r^{N-1}}-\frac{1}{2(N-4)r^3} \quad \mbox{for all }r> R_1.$$
Integrating between $r$ and $+\infty$, this yields
$$-\Delta \ou (r)=\ov(r)=\ov(r)-\ov(\infty)\leq \frac{1}{2r^2}\left(\frac{1}{2(N-4)}-\frac{C'(N)}{r^{N-4}}\right) \quad \mbox{for all }r> R_1.$$
Since $N\geq 5$ and choosing $R_2>R_1$ large enough to have
$$\frac{1}{2(N-4)}-\frac{C'(N)}{r^{N-4}}\leq 1 \mbox{ for all } r> R_3,$$
we get
$$-\Delta \ou\leq \frac{1}{2r^2} \mbox{ for all }r> R_3.$$
In the same way, we have for some $R_3> R_2$
$$\ou'(r) \geq-\frac 1 r \quad \mbox{for all }r> R_3.$$
Integrating the latter and taking the exponential, we get
$$r^4 e^{\ou (r)}\geq cr^3 \quad \mbox{for all }r> R_3,$$
where the constant $c>0$ does not depend on $r$. By Jensen's inequality, we have for any $r>R_3$,
$$cr^3 \le r^4e^{\ou (r)} \leq r^4 \overline{e^u}(r) \leq \max_{|x|=r} |x|^4e^{u(x)}
.$$
This contradicts Step 2.
\hfill\qed
}

\section{Partial regularity of the extremal solution in dimension $N\ge 13$.}
In this section, we prove the second part of Theorem \ref{th reg}. As in the previous sections,  we interpret our equation as the system \eqref{systembi}. 
Here, our task consists in showing that if a rescaled $L^p$ norm of $e^u$ is small on some ball $B$, then it remains small on any ball of smaller radius, which is included in $B$. This provides an estimate in Morrey spaces, for which an $\epsilon$-regularity theorem is available, thanks to the Moser-Trudinger inequality.

Let $u$ be the extremal solution of \eqref{bi}.
By scaling, we
may assume that $B_1(0)\subset\subset \Omega$. For any $x\in B_{{\frac12}}(0)$,
$1\leq p<p^*$ and $0<r\leq 1-|x|$ we define
\[
E(x,r)=r^{4p-N}\int_{B_r(x)}e^{pu}dy.
\]

\begin{lemma}\label{e}
Assume $E(0,1)<1$ and fix $p<q<p^*$. Then, there exists a positive
constant $C=C(N,p,q)>0$ such that
\begin{equation}\label{este}
E(x,r)\leq C\, E(0,1)^{1-{\frac pq}}\quad\mbox{ for all } x\in B_{{\frac12}}(0),\,
0<r\leq {\frac12}.
\end{equation}
\end{lemma}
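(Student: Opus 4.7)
The estimate \eqref{este} is a Morrey-type $\epsilon$-regularity inequality: smallness of $E(0,1)$ must propagate, quantitatively, to every subball $B_r(x)\subset B_{1/2}(0)$. My plan is to combine a routine H\"older interpolation with a localized, constant-tracking version of the Moser bootstrap of Section \ref{sec boot}.

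The starting point is H\"older's inequality, which immediately gives
$$r^{4p-N}\int_{B_r(x)}e^{pu}\,dy\le C\,\bigl(r^{4q-N}\int_{B_r(x)}e^{qu}\,dy\bigr)^{p/q},$$
that is, $E(x,r)\le C\,E_q(x,r)^{p/q}$, where $E_q$ denotes the analogous Morrey quantity at exponent $q$. Thus it is enough to establish the auxiliary bound $E_q(x,r)\le C\,E(0,1)^{(q-p)/p}$; composing with the H\"older estimate then produces the claimed exponent $1-p/q=(q-p)/q$.

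To obtain the bound on $E_q$, I would apply Lemmas \ref{fe} and \ref{se} to a standard cut-off function $\varphi$ supported in $B_r(x)$ and equal to one on $B_{r/2}(x)$, and couple them with the Sobolev embedding exactly as in the proof of Lemma \ref{boot}. This yields a local bootstrap scheme of the form $M_{k+1}\le C\,M_k^{N/(N-2)}$ acting on the Morrey constant at exponent $\alpha_k=(N/(N-2))^k\alpha_0$. The smallness of $E(0,1)$ enters only in the initialization: since $B_r(x)\subset B_1(0)$, an application of the cut-off inequalities at some starting exponent $\alpha_0\in(\alpha^{\sharp},\alpha^*)$ gives $M_0\le C\,E(0,1)^{\kappa_0}$ for some $\kappa_0>0$. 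Because $N/(N-2)>1$, any initial smallness $M_0<1$ is preserved through the iteration, so after finitely many steps one arrives at an estimate of the form $E_q(x,r)\le C\,E(0,1)^{\kappa_0(N/(N-2))^k}$ for any prescribed $q<p^*$. Since $E(0,1)<1$, the exponent on the right can be replaced by any smaller positive number, in particular by $(q-p)/p$, completing the argument.

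The main obstacle I expect is the book-keeping of constants throughout the iteration: one must ensure that the smallness factor is not absorbed into the multiplicative constant at each Sobolev/Moser step, and that the cut-off errors are uniformly controllable on all subballs $B_r(x)\subset B_{1/2}(0)$. A particularly delicate point is the initialization: obtaining $M_0\le C\,E(0,1)^{\kappa_0}$ with $\kappa_0>0$ requires a careful choice of $\alpha_0$ between $\alpha^{\sharp}$ and $p$, using H\"older to interpolate between the trivial bound $\int_{B_r(x)}e^{pu}\le E(0,1)$ and the background $L^q$ estimate for the extremal solution furnished by Corollary \ref{mondaycor} applied on the ambient ball $B_1(0)$.
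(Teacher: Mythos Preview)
Your opening H\"older step and the algebra reducing the lemma to a bound of the form $E_q(x,r)\le C\,E(0,1)^{(q-p)/p}$ are fine. The difficulty, as you rightly flag, lies entirely in the initialization, and here the proposed argument does not go through.

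The Moser scheme of Section~\ref{sec boot} passes information between \emph{exponents} at essentially the \emph{same} scale: from $(H_\alpha)$ on a ball of radius $2r$ one deduces $(H_{\alpha'})$ on the ball of radius $r$, with $\alpha'=\tfrac{N}{N-2}\alpha$. What it does \emph{not} do is transfer smallness from the unit ball down to a ball of radius $r\ll1$. Concretely, carry out your proposed interpolation: for $\alpha_0<p$, H\"older against the only available global information, namely $\int_{B_r(x)}e^{pu}\le \int_{B_1(0)}e^{pu}=E(0,1)$, gives
\[
r^{4\alpha_0-N}\int_{B_r(x)}e^{\alpha_0 u}\ \le\ C\,E(0,1)^{\alpha_0/p}\,r^{\alpha_0(4p-N)/p}.
\]
In the partial-regularity regime $N\ge13$ one always has $p<p^*<N/4$, so the exponent $\alpha_0(4p-N)/p$ is \emph{negative} and the right-hand side blows up as $r\to0$. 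The same obstruction appears if you interpolate instead between $p$ and $q$ using the background estimate from Corollary~\ref{mondaycor}: the missing factor is $r^{\theta(4p-N)}$ for some $\theta>0$. There is no H\"older route from $E(0,1)$ to a uniform bound on $M_0$ at small scales, so the iteration never gets started. (A secondary issue: $(H_\alpha)$ requires simultaneous control of $v^{2\alpha}$, which you do not address, and the ``either \eqref{3} or \eqref{4}'' dichotomy in Lemma~\ref{se} makes the constant-tracking less clean than the formula $M_{k+1}\le C\,M_k^{N/(N-2)}$ suggests.)

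The paper supplies the missing scale-transfer mechanism by a completely different device. One splits $u=u_1+u_2$ on $B_1(0)$ with $u_1$ harmonic and $u_2$ the Dirichlet potential of $v$, and then splits $E(x,r)$ according to a level set $\{u_2\le\theta\}$. On $\{u_2\le\theta\}$ one has $e^{pu}\le e^{p\theta}e^{pu_1}$, and since $e^{pu_1}$ is subharmonic the mean-value inequality gives $\fint_{B_r(x)}e^{pu_1}\le\fint_{B_{1/2}(x)}e^{pu_1}\lesssim E(0,1)$; this is precisely the passage from scale $1$ to scale $r$ that your scheme lacks. On $\{u_2>\theta\}$ the background $L^q$ estimate together with a Chebyshev measure bound produces a term of the form $[E(x,r)/(r^{4p}e^{p\theta})]^{1-p/q}$. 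Optimizing over $\theta$ balances the two contributions and closes the inequality. The bootstrap of Section~\ref{sec boot} enters only through the background bound $\int_{B_r(x)}e^{qu}\lesssim r^{N-4q}$, with a fixed constant and no smallness.
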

\begin{proof}
Let $u=u_1+u_2$, where
\[
\left\{
\begin{aligned}
-&\Delta u_1=0&&\quad\mbox{ in } B_1(0),\\
&u_1=u&&\quad\mbox{ on }\partial B_1(0),
\end{aligned}
\right. \quad\mbox{ and }\quad
\left\{
\begin{aligned}
-&\Delta u_2=v&&\quad\mbox{ in } B_1(0),\\
&u_2=0&&\quad\mbox{ on }\partial B_1(0).
\end{aligned}
\right.
\]
By the maximum principle, $u_1,u_2>0$ in $B_1(0)$. Write $E(x,r)=E_1+E_2$, where
$$
E_1=r^{4p-N}\int_{B_r(x)\cap [u_2\leq \theta]}
e^{pu}\;dy\,\,\,, \quad E_2=r^{4p-N}\int_{B_r(x)\cap [u_2>
\theta]} e^{pu}\;dy,
$$
and where $\theta>0$ will be fixed later on. By Kato's inequality,
$e^{pu_1}$ is subharmonic. By the mean value inequality, it follows that
\begin{equation} \label{onemore}
\fint_{B_r(x)}
e^{pu_1}\;dy \le \fint_{B_{{\frac12}}(x)}
e^{pu_1}\;dy.
\end{equation}
Hence,
\begin{multline}\label{este1}
E_1
\leq r^{4p-N}e^{p\theta} \int_{B_r(x)}
e^{pu_1}\;dy\lesssim  r^{4p}e^{p\theta}\int_{B_{{\frac12}}(x)}
e^{pu}\;dy\\
\leq  r^{4p}e^{p\theta}\int_{B_{1}(0)} e^{pu}\;dy\leq
r^{4p}e^{p\theta}E(0,1).
\end{multline}
To estimate $E_2$, we first note that
\neweq{este2}
\big\vert B_r(x)\cap [u_2>\theta] \big\vert\leq
e^{-p\theta}\int_{B_r(x)\cap [u_2> \theta]} e^{pu}\;dy\leq
\frac{r^N E(x,r)}{r^{4p}e^{p\theta}}.
\endeq
By H\"older's inequality, we also have
\begin{equation} \label{este2b}
\int_{B_r(x)\cap [u_2> \theta]} e^{pu}\;dy \le \left(\int_{B_r(x)}e^{qu}\;dy\right)^{{\frac pq}}\!\!\! \Big |
\big\{B_r(x)\cap [u_2>\theta]
\big\}\Big|^{1-{\frac pq}}.
\end{equation}
Recall from Section \ref{sec boot} that
\[
\int_{B_r(x)} e^{qu}\;dy\lesssim r^{N-4q}.
\]
Using this together with \eq{este2}, \eqref{este2b}, we find
\neweq{este3}
E_2\leq r^{4p-N}\left(\int_{B_r(x)}e^{qu}\;dy
\right)^{{\frac pq}}\!\!\! \Big |
\big\{B_r(x)\cap [u_2>\theta]
\big\}\Big|^{1-{\frac pq}}\lesssim \left[\frac{E(x,r)}{r^{4p} e^{p\theta}}
\right]^{1-{\frac pq}}.
\endeq
Combining \eq{este1} and \eq{este3}, for all $x\in B_{{\frac12}}(0)$ and
$0<r\leq {\frac12}$ we find
\neweq{este4}
E(x,r)\lesssim r^{4p}e^{\theta p}E(0,1)+\left[\frac{E(x,r)}{r^{4p}
e^{p\theta}} \right]^{1-{\frac pq}}.
\endeq
If for some $x\in B_{{\frac12}}(0)$ and $0<r\leq {\frac12}$ we have
\neweq{este5}
r^{4p}E(0,1)\geq \left[\frac{E(x,r)}{r^{4p}} \right]^{1-{\frac pq}},
\endeq
then, using the assumption $E(0,1)<1$, we find
$$
E(x,r)\leq r^{4p{\frac{2q-p}{q-p}}}E(0,1)^{\frac q{q-p}}\le
E(0,1)^{1-{\frac pq}},
$$
which is the desired inequality. So, assume that \eq{este5} does not hold. Then,
we can find $\theta>0$ such that
$$
r^{4p}(e^{p\theta})^{2-{\frac pq}}E(0,1)= \left[\frac{E(x,r)}{r^{4p}}
\right]^{1-{\frac pq}},
$$
that is,
$$
r^{4p}e^{p\theta}E(0,1) = \left[\frac{E(x,r)}{r^{4p}e^{p\theta}}
\right]^{1-{\frac pq}}.
$$
Using this fact in \eq{este4} we derive
$$
E(x,r)\lesssim r^{4p}e^{\theta p}E(0,1)\lesssim \Big[ E(0,1)
E(x,r)\Big]^{\frac{q-p}{2q-p}},
$$
and finally $E(x,r)\lesssim E(0,1)^{1-{\frac pq}}$.
\end{proof}

\noindent{\bf Proof of Theorem \ref{th reg} completed.} Let $\Sigma$ be the singular set of $u$. Let
$1<p<q<p^*$ be fixed.  We claim that
\neweq{sigma1}
\Sigma\subset \left\{ x\in
\Omega: \limsup_{r\rightarrow 0} r^{4p-N}\int_{B_r(x)}
e^{pu}>0\right\}.
\endeq
Assume to the contrary that there exists $x_0\in \Sigma$ such that
$\lim_{r\rightarrow 0} E(x_0,r)=0$. Fix $\varepsilon_0>0$. Then,
there exists $\rho=\rho(\eps_0)>0$ such that $E(x_0,r)\leq \eps_0$
for all $0<r\leq \rho$. For the sake of clarity let us assume that
$x_0=0$ and $\rho=1$. Thus, we can apply Lemma \ref{e} and obtain
$$
E(x,r)\leq C(N,p,q) \eps_0^{1-{\frac pq}}\quad\mbox{ for all } x\in
B_{{\frac12}}(0)\,,0< r\leq {\frac12}.
$$
Using H\"older's inequality this implies that $e^u\in M^{N/4}(B_{{\frac12}}(0))$
and
$$
\|e^u\|_{M^{N/4}(B_{{\frac12}}(0))}\leq C(N,p,q)\eps_0^{{\frac1p}-{\frac1q}}.
$$
Let
$$
w(x)=\int_{\R^N}|x-y|^{4-N}e^{u(y)}dy\,,\quad x\in B_{{\frac12}}(0)
$$
and $\tilde w=u-w$. By Lemma 7.20 in \cite{GT} it follows that
$e^{\beta w}\in L^1(B_{{\frac12}}(0))$ for all
$$
0<\beta<\frac{C(N)}{\|e^{u}\|_{M^{\frac N4}(B_{{\frac12}}(0))}}.
$$
Since $\tilde w$ is biharmonic in $B_{{\frac12}}(0)$, it follows that
$e^u\in L^{\beta}(B_{1/4}(0))$ for all
$$
0<\beta<C(N,p,q) \eps_0^{{\frac1q}-{\frac1p}}.
$$
Letting $\eps_0<<1$ small, we have  $e^u\in
L^\beta(B_{1/4}(0))$ for some $\beta>N/4$, which, by standard
regularity theory, yields $u\in L^\infty(B_{1/8}(0))$ and
contradicts $0\in \Sigma$.

Hence, \eq{sigma1} holds for all $1<p<p^*$. By \cite[Lemma
5.3.4]{book}, it follows that ${\mathcal
H}^{N-4p}(\Sigma)=0$ for all $1<p<p^*$. Thus ${\mathcal
H}_{dim}(\Sigma)\leq N-4p^*$. \qed

\section{A scaling argument}
This last section is devoted to the proof of Theorem \ref{th reg2}.  By rescaling, we may always assume that $\lambda=1$. By standard elliptic regularity, it suffices to show that $u\le C$ in $\Omega$. We assume to the contrary that there exists a sequence of solutions $u_{n}$ of fixed index $k$, such that $M_{n}:=\max_{\super\Omega} u_{n}\to+\infty$, as $n\to+\infty$. Let $x_{n}$ denote a corresponding point of maximum of $u_{n}$.
Passing to a subsequence if necessary, we may assume that there exists  $x_{0}\in\super\Omega$, such that $x_{n}\to x_{0}$, as $n\to+\infty$. Since $\Omega$ is convex, we can also assert that $u_{n}$ is uniformly bounded in a fixed neighborhood of the boundary i.e. $x_{n}\in\omega\subset\subset\Omega$ for large $n$. See e.g. \cite{wei} for this standard boundary estimate.

We use a scaling argument. Let $r_{n}=e^{-M_{n}/4}$ and $U_{n}(x)= u_{n}(x_{n}+r_{n}x)-M_{n}$, for $x\in \Omega_{n}:=\frac1{r_{n}}(\Omega-x_{n})$. Then, $U_{n}$ solves
\begin{equation} \label{fireg4}
\left\{
\begin{aligned}
\Delta^2 U_{n}&= e^{U_{n}}\qquad\text{in $\Omega_{n}$,}\\
U_{n}+M_{n}=\Delta U_{n}&=0\qquad\text{on $\partial\Omega_{n}$.}
\end{aligned}
\right.
\end{equation}
%
%
Since $x_{n}\in\omega\subset\subset\Omega$, $\Omega_{n}\to\R^N$, as $n\to+\infty$. We claim that $(U_{n})$ is uniformly bounded on compact sets of $\R^N$. To see this, fix a ball $B_{R}$ and $n$ so large that $B_{R}\subset\Omega_{n}$. Write $-\Delta U_{n}=V_{n}=V_{n}^1-V_{n}^2$, where $V_{n}^1$ solves
\begin{equation*}
\left\{
\begin{aligned}
-\Delta V_{n}^1&= e^{U_{n}}\qquad\text{in $B_{R}$,}\\
V_{n}^1&=0\qquad\text{on $\partial B_{R}$,}
\end{aligned}
\right.
\end{equation*}
and $V_{n}^2$ is harmonic in $B_{R}$. Since $U_{n}\le0$, $V_{n}^1$ is uniformly bounded in $B_{R}$.
Using the assumption \eqref{3061bis}, we also have
\begin{equation} \label{l1cont}
\int_{B(x_{n},r_{n}R)} v_{n}\;dx \le C (r_{n}R)^{N-2}.
\end{equation}
In other words, $V_{n}=-\Delta U_{n}$ is bounded in $L^1(B_{R})$. Since  $V_{n}^1$ is uniformly bounded in $B_{R}$, it follows that $V_{n}^2$ is bounded in $L^1(B_{R})$. Since $V_{n}^2$ is harmonic, $V_{n}^2$ is uniformly bounded in $B_{R/2}$. Similarly, write $U_{n}=U_{n}^1-U_{n}^2$, where
\begin{equation*}
\left\{
\begin{aligned}
-\Delta U_{n}^1&= V_{n}\qquad\text{in $B_{R/2}$,}\\
U_{n}^1&=0\qquad\text{on $\partial B_{R/2}$,}
\end{aligned}
\right.
\end{equation*}
and $U_{n}^2\ge0$ is harmonic in $B_{R/2}$. Since $V_{n}$ is bounded in $B_{R/2}$, so is $U_{n}^1$. Since $U_{n}^2(0)=U_{n}^1(0)$, we may now apply Harnack's inequality to conclude that $U_{n}^2$ is uniformly bounded in $B_{R/4}$. Hence, $U_{n}$ is uniformly bounded on compact subsets of $\R^N$.
So, we may pass to the limit in the first line of \eqref{fireg4} and find a solution $U$ of finite Morse index to
\eqref{gelfandbe}. Thanks to \eqref{l1cont}, if $V=-\Delta U$ and $R>0$, then
$$
\int_{B_{R}} V\;dx \le C R^{N-2},
$$
which is possible only if $\super V(\infty)=0$. This contradicts Theorem \ref{liouville}.

\hfill\qed

\appendix
\section{Appendix}
The following is a consequence of Kato's inequality. It was also proved by N. Ghoussoub et al. \cite{gh} in unpublished work. Its proof builds upon a similar result of P. Souplet \cite{souplet}, as well as similar inequalities in bounded domains obtained by C. Cowan, P. Esposito, and N. Ghoussoub \cite{ceg}.
\begin{proposition}\label{ogps}
Let  $u$ be a stable solution of \eq{gelfandbe} and $v=-\Delta u$. Then,
$$
v\geq \sqrt{2} e^{{\frac{u}{2}}}\quad\mbox{ in }\R^N.
$$
\end{proposition}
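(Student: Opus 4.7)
The plan is to prove this pointwise inequality by showing that $h := \sqrt{2}\,e^{u/2} - v$ has $h^{+} \equiv 0$ on $\R^N$. A direct calculation using $\Delta(e^{u/2}) = \tfrac{1}{2}e^{u/2}\Delta u + \tfrac{1}{4}e^{u/2}|\nabla u|^2$ together with the system $-\Delta u = v$, $-\Delta v = e^u$ yields
\[
\Delta h - \tfrac{1}{\sqrt{2}}\,e^{u/2}\,h \;=\; \tfrac{\sqrt{2}}{4}\,e^{u/2}|\nabla u|^2 \;\geq\; 0,
\]
and Kato's inequality then gives the distributional estimate $\Delta h^{+} \geq \tfrac{1}{\sqrt{2}}\,e^{u/2}\,h^{+}$ in $\mathcal{D}'(\R^N)$.

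Testing this inequality against $h^{+}\eta^2$ for a cutoff $\eta\in C^1_c(\R^N)$ and integrating by parts produces, using the identity $|\nabla(h^{+}\eta)|^2 = \eta^2|\nabla h^{+}|^2 + 2 h^{+}\eta\,\nabla h^{+}\!\cdot\!\nabla\eta + (h^{+})^2|\nabla\eta|^2$, the inequality
\[
\int |\nabla(h^{+}\eta)|^2 + \tfrac{1}{\sqrt{2}}\int e^{u/2}(h^{+})^2\eta^2 \;\leq\; \int (h^{+})^2|\nabla\eta|^2.
\]
Plugging in the interpolated stability inequality of Lemma \ref{lem interp} with $s=\tfrac{1}{2}$ applied to $\psi = h^{+}\eta$, namely $\int e^{u/2}(h^{+}\eta)^2 \leq \int|\nabla(h^{+}\eta)|^2$, one obtains the capacity-type bound
\[
\Bigl(1+\tfrac{1}{\sqrt{2}}\Bigr) \int e^{u/2}(h^{+})^2\eta^2 \;\leq\; \int (h^{+})^2|\nabla\eta|^2.
\]

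To conclude, I take $\eta=\eta_R$ a standard cutoff with $\eta_R\equiv 1$ on $B_R$, $\mathrm{supp}\,\eta_R\subset B_{2R}$, and $|\nabla\eta_R|\lesssim 1/R$. Since $v>0$ one has the pointwise bound $(h^{+})^2\leq 2e^u$, which combined with the stability estimate $\int_{B_{2R}}e^u\lesssim R^{N-4}$ gives $\int_{B_R}e^{u/2}(h^{+})^2\lesssim R^{N-6}$; this already forces $h^{+}\equiv 0$ in low dimensions $N\leq 5$. For higher $N$, the same argument applied to $(h^{+})^\alpha$ (using that $\Delta((h^{+})^\alpha)\geq\tfrac{\alpha}{\sqrt{2}}e^{u/2}(h^{+})^\alpha$ for $\alpha\geq 1$, via the chain rule and Kato) produces $\int_{B_R}e^{u/2}(h^{+})^{2\alpha}\lesssim R^{N-4\alpha-2}$, once we invoke the refined bound $\int_{B_R}e^{\alpha u}\lesssim R^{N-4\alpha}$ of Corollary \ref{mondaycor} (valid for $\alpha<p^*$). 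Since $p^*>3$, one may select $\alpha\in((N-2)/4,\,p^*)$ and let $R\to\infty$ to obtain $h^{+}\equiv 0$.

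The principal technical obstacle is this last closure: the argument requires both the exponent $4\alpha+2$ to exceed $N$ and the bootstrap window $\alpha<p^*$ to accommodate it, which is ultimately controlled by the quantitative bound $p^*>3$ provided by Corollary \ref{mondaycor}.
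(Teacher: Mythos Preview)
Your opening moves are sound and coincide with the paper's: the same function $h=\sqrt{2}\,e^{u/2}-v$, the same differential inequality $\Delta h\ge\tfrac{1}{\sqrt2}e^{u/2}h$, and the same reduction via Kato to showing $h^{+}\equiv 0$. The first capacity-type inequality
\[
\int|\nabla(h^{+}\eta)|^2+\tfrac{1}{\sqrt2}\int e^{u/2}(h^{+})^2\eta^2\le\int(h^{+})^2|\nabla\eta|^2
\]
is correct, and together with $(h^{+})^2\le 2e^{u}$ and the stability bound $\int_{B_{2R}}e^{u}\lesssim R^{N-4}$ it does close the argument for $N\le 5$.

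The trouble is the closure for $N\ge 6$. Two independent problems arise:
\begin{itemize}
\item Your appeal to Corollary~\ref{mondaycor} is not justified. That corollary rests on the hypothesis $(H_\alpha)$, which demands in particular $\int_{B_R}v^{2\alpha}\lesssim R^{N-4\alpha}$. For a generic stable solution on $\R^N$ this is simply false: for instance the solutions of Theorem~\ref{nonradialstable} have $v=-\Delta u\to 2\sum_i\alpha_i>0$ at infinity, so $\int_{B_R}v^{2\alpha}\sim R^{N}$. The paper only invokes Corollary~\ref{mondaycor} under the extra assumption $\overline v(\infty)=0$ (Liouville setting) or in bounded domains, never for an arbitrary stable entire solution.
\item Even granting the estimate $\int_{B_R}e^{\alpha u}\lesssim R^{N-4\alpha}$ for all $\alpha<p^*$, your closure needs $\alpha>(N-2)/4$, hence $N<4p^*+2\approx 14.1$. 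The argument therefore collapses for $N\ge 15$, whereas the proposition carries no dimensional restriction.
\end{itemize}

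The paper sidesteps both issues by a different endgame. Instead of a cutoff, it multiplies $\Delta w\ge\tfrac{1}{\sqrt2}e^{u/2}w$ by $w^{+}$ and integrates over the \emph{solid ball} $B_R$, obtaining
\[
\int_{B_R}|\nabla w^{+}|^2+\tfrac{1}{\sqrt2}\int_{B_R}e^{u/2}(w^{+})^2\le R^{N-1}f'(R),\qquad f(R)=\tfrac12\int_{\partial B_1}|w^{+}(Rx)|^2\,d\sigma.
\]
Since $(w^{+})^2\le 2e^{u}$ and $\int_{B_R}e^{u}\lesssim R^{N-4}$, one gets $\int_0^R r^{N-1}f(r)\,dr\lesssim R^{N-4}$; this forbids $f$ from being eventually increasing, so there is a sequence $R_j\to\infty$ with $f'(R_j)\le 0$, forcing $w^{+}\equiv 0$ on each $B_{R_j}$ and hence everywhere. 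This ``good radius'' trick works in every dimension and uses nothing beyond the basic stability bound on $\int e^{u}$ and the positivity $v>0$.
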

\begin{proof}
Let $w:=\sqrt{2}e^{{\frac{u}{2}}}-v$. A straightforward calculation yields
\neweq{ww}
\Delta w \geq \frac{1}{\sqrt 2}e^{{\frac{u}{2}}}w\quad\mbox{ in }\R^N.
\endeq
Since $v>0$ (see the proof of Lemma \ref{l1v}), we have $w^+\leq \sqrt{2}e^{{\frac{u}{2}}}$ in $\R^N$. We multiply \eq{ww} by $w^+$ and integrate over $B_R(0)$, for $R>0$. We obtain
\neweq{ww1}
\int\limits_{B_R(0)} |\nabla w^+|^2+\frac{1}{\sqrt 2}\int\limits_{B_R(0)}e^{{\frac{u}{2}}} |w^+|^2\leq \int\limits_{\partial B_R(0)} w^+\frac{\partial w^+}{\partial R}.
\endeq
For all $R>0$ define
$$
f(R):=\frac{1}{2}\int\limits_{\partial B_1(0)} |w^+(Rx)|^2d\sigma(x).
$$
Then \eq{ww1} reads
\neweq{ww2}
\int\limits_{B_R(0)} |\nabla w^+|^2+\frac{1}{\sqrt 2}\int\limits_{B_R(0)}e^{{\frac{u}{2}}} |w^+|^2\leq R^{N-1}f'(R).
\endeq
Since $u$ is stable and $e^u\geq 2|w^+|^2$ in $\R^N$, we have
$$
R^{N-4}\gtrsim \int\limits_{B_R(0)} e^u ds\geq \int_0^R r^{N-1}f(r)dr.
$$
In particular, $f$ cannot be strictly increasing in a given interval $[S,+\infty)$. Hence, there exists an increasing sequence $\{R_j\}$ such that
$R_j\rightarrow \infty$ and {$f'(R_j)\leq 0$.} Now, letting $R=R_j$ in \eq{ww2}, we find $w^+\equiv 0$, that is, $v\geq \sqrt{2} e^{{\frac{u}{2}}}$ in $\R^N$.

\end{proof}

\section*{Acknowledgement}This work has been supported by a Ulysses project between France and Ireland. The authors wish to thank the funding agencies Egide (France) and IRCSET (Ireland).

\bibliographystyle{amsalpha}

\begin{bibdiv}
\begin{biblist}

\bib{agg}{article}{
   author={Arioli, Gianni},
   author={Gazzola, Filippo},
   author={Grunau, Hans-Christoph},
   title={Entire solutions for a semilinear fourth order elliptic problem
   with exponential nonlinearity},
   journal={J. Differential Equations},
   volume={230},
   date={2006},
   number={2},
   pages={743--770},
   issn={0022-0396},
   review={\MR{2269942 (2007i:35064)}},
   doi={10.1016/j.jde.2006.05.015},
}

\bib{aggm}{article}{
   author={Arioli, Gianni},
   author={Gazzola, Filippo},
   author={Grunau, Hans-Christoph},
   author={Mitidieri, Enzo},
   title={A semilinear fourth order elliptic problem with exponential
   nonlinearity},
   journal={SIAM J. Math. Anal.},
   volume={36},
   date={2005},
   number={4},
   pages={1226--1258 (electronic)},
   issn={0036-1410},
   review={\MR{2139208 (2006c:35070)}},
   doi={10.1137/S0036141002418534},
}

\bib{bffg}{article}{
   author={Berchio, Elvise},
   author={Farina, Alberto},
   author={Ferrero, Alberto},
   author={Gazzola, Filippo},
   title={Existence and stability of entire solutions to a semilinear fourth
   order elliptic problem},
   journal={J. Differential Equations},
   volume={252},
   date={2012},
   number={3},
   pages={2596--2616},
   issn={0022-0396},
   review={\MR{2860632}},
}

\bib{bergaz}{article}{
   author={Berchio, Elvise},
   author={Gazzola, Filippo},
   title={Some remarks on biharmonic elliptic problems with positive,
   increasing and convex nonlinearities},
   journal={Electron. J. Differential Equations},
   date={2005},
   pages={No. 34, 20 pp. (electronic)},
   issn={1072-6691},
   review={\MR{2135245 (2006e:35067)}},
}

\bib{brezis}{article}{
   author={Brezis, Haim},
   title={Is there failure of the inverse function theorem?},
   conference={
      title={Morse theory, minimax theory and their applications to
      nonlinear differential equations},
   },
   book={
      series={New Stud. Adv. Math.},
      volume={1},
      publisher={Int. Press, Somerville, MA},
   },
   date={2003},
   pages={23--33},
   review={\MR{2056500 (2005h:35083)}},
}

\bib{cowanjuly}{article}{
   author={Cowan, Craig},
   title={Liouville theorems for stable Lane-Emden systems and biharmonic problems},
   journal={http://arxiv.org/abs/1207.1081},
   date={4 july 2012},
}

\bib{ceg}{article}{
   author={Cowan, Craig},
   author={Esposito, Pierpaolo},
   author={Ghoussoub, Nassif},
   title={Regularity of extremal solutions in fourth order nonlinear
   eigenvalue problems on general domains},
   journal={Discrete Contin. Dyn. Syst.},
   volume={28},
   date={2010},
   number={3},
   pages={1033--1050},
   issn={1078-0947},
   review={\MR{2644777 (2011e:35085)}},
   doi={10.3934/dcds.2010.28.1033},
}

\bib{cg}{article}{
   author={Cowan, Craig},
   author={Ghoussoub, Nassif},
   title={Regularity of semi-stable solutions to fourth order nonlinear eigenvalue problems on general domains},
   journal={http://fr.arxiv.org/abs/1206.3471},
   date={15 june 2012},
}

\bib{df}{article}{
   author={Dancer, E. N.},
   author={Farina, Alberto},
   title={On the classification of solutions of $-\Delta u=e^u$ on $\Bbb
   R^N$: stability outside a compact set and applications},
   journal={Proc. Amer. Math. Soc.},
   volume={137},
   date={2009},
   number={4},
   pages={1333--1338},
   issn={0002-9939},
   review={\MR{2465656 (2009h:35123)}},
   doi={10.1090/S0002-9939-08-09772-4},
}

\bib{ddgm}{article}{
   author={D{\'a}vila, Juan},
   author={Dupaigne, Louis},
   author={Guerra, Ignacio},
   author={Montenegro, Marcelo},
   title={Stable solutions for the bilaplacian with exponential
   nonlinearity},
   journal={SIAM J. Math. Anal.},
   volume={39},
   date={2007},
   number={2},
   pages={565--592},
   issn={0036-1410},
   review={\MR{2338421 (2008h:35053)}},
   doi={10.1137/060665579},
}

\bib{dfg}{article}{
   author={D{\'a}vila, Juan},
   author={Flores, Isabel},
   author={Guerra, Ignacio},
   title={Multiplicity of solutions for a fourth order problem with
   exponential nonlinearity},
   journal={J. Differential Equations},
   volume={247},
   date={2009},
   number={11},
   pages={3136--3162},
   issn={0022-0396},
   review={\MR{2569861 (2010j:35115)}},
   doi={10.1016/j.jde.2009.07.023},
}

\bib{book}{book}{
   author={Dupaigne, Louis},
   title={Stable solutions of elliptic partial differential equations},
   series={Chapman \& Hall/CRC Monographs and Surveys in Pure and Applied
   Mathematics},
   volume={143},
   publisher={Chapman \& Hall/CRC, Boca Raton, FL},
   date={2011},
   pages={xiv+321},
   isbn={978-1-4200-6654-8},
   review={\MR{2779463}},
   doi={10.1201/b10802},
}

\bib{dfs}{article}{
   author={Dupaigne, Louis},
   author={Farina, Alberto},
   author={Sirakov, Boyan},
   title={Regularity of the extremal solution for the
Liouville system},
   journal={to appear in Proceedings of the ERC Workshop on Geometric Partial Differential Equations},
   review={ Ed. Scuola Normale Superiore di Pisa.},
}

\bib{esposito}{article}{
   author={Esposito, Pierpaolo},
   title={personal communication},
}

\bib{egg}{book}{
   author={Esposito, Pierpaolo},
   author={Ghoussoub, Nassif},
   author={Guo, Yujin},
   title={Mathematical analysis of partial differential equations modeling
   electrostatic MEMS},
   series={Courant Lecture Notes in Mathematics},
   volume={20},
   publisher={Courant Institute of Mathematical Sciences, New York},
   date={2010},
   pages={xiv+318},
   isbn={978-0-8218-4957-6},
   review={\MR{2604963 (2011c:35005)}},
}

\bib{farina}{article}{
   author={Farina, Alberto},
   title={On the classification of solutions of the Lane-Emden equation on
   unbounded domains of $\Bbb R^N$},
   language={English, with English and French summaries},
   journal={J. Math. Pures Appl. (9)},
   volume={87},
   date={2007},
   number={5},
   pages={537--561},
   issn={0021-7824},
   review={\MR{2322150 (2008c:35070)}},
   doi={10.1016/j.matpur.2007.03.001},
}

\bib{ggs}{book}{
   author={Gazzola, Filippo},
   author={Grunau, Hans-Christoph},
   author={Sweers, Guido},
   title={Polyharmonic boundary value problems},
   series={Lecture Notes in Mathematics},
   volume={1991},
   note={Positivity preserving and nonlinear higher order elliptic equations
   in bounded domains},
   publisher={Springer-Verlag},
   place={Berlin},
   date={2010},
   pages={xviii+423},
   isbn={978-3-642-12244-6},
   review={\MR{2667016 (2011h:35001)}},
   doi={10.1007/978-3-642-12245-3},
}

\bib{gelfand}{article}{
   author={Gel{\cprime}fand, I. M.},
   title={Some problems in the theory of quasilinear equations},
   journal={Amer. Math. Soc. Transl. (2)},
   volume={29},
   date={1963},
   pages={295--381},
   issn={0065-9290},
   review={\MR{0153960 (27 \#3921)}},
}

\bib{gh}{article}{
   author={Ghoussoub, N.},
   title={personal communication},
  }

\bib{GT}{book}{
   author={Gilbarg, David},
   author={Trudinger, Neil S.},
   title={Elliptic partial differential equations of second order},
   series={Classics in Mathematics},
   note={Reprint of the 1998 edition},
   publisher={Springer-Verlag},
   place={Berlin},
   date={2001},
   pages={xiv+517},
   isbn={3-540-41160-7},
   review={\MR{1814364 (2001k:35004)}},
}

\bib{jl}{article}{
   author={Joseph, D. D.},
   author={Lundgren, T. S.},
   title={Quasilinear Dirichlet problems driven by positive sources},
   journal={Arch. Rational Mech. Anal.},
   volume={49},
   date={1972/73},
   pages={241--269},
   issn={0003-9527},
   review={\MR{0340701 (49 \#5452)}},
}

\bib{l}{article}{
   author={Levin, Daniel},
   title={On an analogue of the Rozenblum-Lieb-Cwikel inequality for the
   biharmonic operator on a Riemannian manifold},
   journal={Math. Res. Lett.},
   volume={4},
   date={1997},
   number={6},
   pages={855--869},
   issn={1073-2780},
   review={\MR{1492125 (99f:35151)}},
}

\bib{lin}{article}{
   author={Lin, Chang-Shou},
   title={A classification of solutions of a conformally invariant fourth
   order equation in ${\bf R}^n$},
   journal={Comment. Math. Helv.},
   volume={73},
   date={1998},
   number={2},
   pages={206--231},
   issn={0010-2571},
   review={\MR{1611691 (99c:35062)}},
   doi={10.1007/s000140050052},
}

\bib{moradifam}{article}{
   author={Moradifam, Amir},
   title={The singular extremal solutions of the bi-Laplacian with
   exponential nonlinearity},
   journal={Proc. Amer. Math. Soc.},
   volume={138},
   date={2010},
   number={4},
   pages={1287--1293},
   issn={0002-9939},
   review={\MR{2578522 (2011c:35126)}},
   doi={10.1090/S0002-9939-09-10257-5},
}

\bib{ns}{article}{
   author={Nagasaki, Ken'ichi},
   author={Suzuki, Takashi},
   title={Spectral and related properties about the Emden-Fowler equation
   $-\Delta u=\lambda e^u$ on circular domains},
   journal={Math. Ann.},
   volume={299},
   date={1994},
   number={1},
   pages={1--15},
   issn={0025-5831},
   review={\MR{1273074 (95f:35190)}},
   doi={10.1007/BF01459770},
}

\bib{r}{article}{
   author={Rozenbljum, G. V.},
   title={Distribution of the discrete spectrum of singular differential
   operators},
   language={Russian},
   journal={Dokl. Akad. Nauk SSSR},
   volume={202},
   date={1972},
   pages={1012--1015},
   issn={0002-3264},
   review={\MR{0295148 (45 \#4216)}},
}

\bib{serrin}{article}{
	 author={Serrin, James},
   title={Local behavior of solutions of quasi-linear equations},
   journal={Acta Math.},
   volume={111},
   date={1964},
   pages={247--302},
}

\bib{souplet}{article}{
   author={Souplet, Philippe},
   title={The proof of the Lane-Emden conjecture in four space dimensions},
   journal={Adv. Math.},
   volume={221},
   date={2009},
   number={5},
   pages={1409--1427},
   issn={0001-8708},
   review={\MR{2522424 (2010h:35088)}},
   doi={10.1016/j.aim.2009.02.014},
}

\bib{stein}{article}{
   author={Stein, Elias M.},
   title={Interpolation of linear operators},
   journal={Trans. Amer. Math. Soc.},
   volume={83},
   date={1956},
   pages={482--492},
   issn={0002-9947},
   review={\MR{0082586 (18,575d)}},
}

\bib{trudinger}{article}{
	 author={Trudinger, Neil S.},
   title={On Harnack type inequalities and their application to quasilinear elliptic equations},
   journal={Comm. Pure Appl. Math.},
   volume={20},
   date={1967},
   pages={721--747},
}

\bib{wang}{article}{
   author = {Wang, Kelei},
   affiliation = {School of Mathematics and Statistics, The University of Sydney, Sydney, NSW 2006, Australia},
   title = {Partial regularity of stable solutions to the Emden equation},
   journal = {Calculus of Variations and Partial Differential Equations},
   publisher = {Springer Berlin / Heidelberg},
   issn = {0944-2669},
   keyword = {Mathematics and Statistics},
   pages = {601-610},
   volume = {44},
   issue = {3},
   url = {http://dx.doi.org/10.1007/s00526-011-0446-3},
   note = {10.1007/s00526-011-0446-3},
   year = {2012},
}

\bib{warnault}{article}{
   author={Warnault, Guillaume},
   title={Liouville theorems for stable radial solutions for the biharmonic operator},
   journal = {Asymptotic Analysis},
   volume = {69},
   pages = {87-98},
   year = {2010},
}

\bib{wei}{article}{
   author={Wei, Juncheng},
   title={Asymptotic behavior of a nonlinear fourth order eigenvalue
   problem},
   journal={Comm. Partial Differential Equations},
   volume={21},
   date={1996},
   number={9-10},
   pages={1451--1467},
   issn={0360-5302},
   review={\MR{1410837 (97h:35066)}},
   doi={10.1080/03605309608821234},
}

\bib{wxx}{article}{
   author={Wei, Juncheng},
   author={Xu, Xingwang },
   author={Yang, Wen},
   title={Classification of stable solutions to biharmonic problems in large dimensions},
   review={  http://www.math.cuhk.edu.hk/$\sim$wei/publicationpreprint.html},
}

\bib{wei-ye}{article}{
   author={Wei, Juncheng},
   author={Ye, Dong},
   title={Nonradial solutions for a conformally invariant fourth order
   equation in $\Bbb R^4$},
   journal={Calc. Var. Partial Differential Equations},
   volume={32},
   date={2008},
   number={3},
   pages={373--386},
   issn={0944-2669},
   review={\MR{2393073 (2009h:35158)}},
   doi={10.1007/s00526-007-0145-2},
}

\bib{wy2}{article}{
   author={Wei, Juncheng},
   author={Ye, Dong},
   title={Liouville Theorems for finite Morse index solutions of Biharmonic problem},
   review={  http://www.math.cuhk.edu.hk/$\sim$wei/publicationpreprint.html},
}

\bib{yang}{article}{
   author={Yang, Xue-Feng},
   title={Nodal sets and Morse indices of solutions of super-linear elliptic
   PDEs},
   journal={J. Funct. Anal.},
   volume={160},
   date={1998},
   number={1},
   pages={223--253},
   issn={0022-1236},
   review={\MR{1658692 (99j:35058)}},
   doi={10.1006/jfan.1998.3301},
}

\end{biblist}
\end{bibdiv}







\end{document}